\begin{document}

\title{On abstract ${\cal F}$-systems\thanks{This work was partially supported by the National Agency for Scientific and Technological Promotions (ANPCYT) (grant PICT 2017-1702), and Universidad Nacional del Sur (grant PGI 24/I265), Argentina.}
}
\subtitle{A graph-theoretic model for paradoxes involving a falsity predicate and its application to argumentation frameworks}

\titlerunning{On abstract ${\cal F}$-systems}        

\author{Gustavo Bodanza      
}


\institute{Departamento de Humanidades, Universidad Nacional del Sur, and Instituto de Investigaciones Econ\'{o}micas y Sociales del Sur (IIESS), CONICET \at
              Bah\'{\i}a Blanca, Argentina \\
              Tel.: +54-291-4595138\\
              \email{bodanza@gmail.com}           
}

\date{Received: date / Accepted: date}

\maketitle

\begin{abstract}
${\cal F}$-systems are digraphs that enable to model sentences that predicate the falsity of other sentences. Paradoxes like the Liar and Yablo's can be analyzed with that tool to find graph-theoretic patterns. In this paper we present the ${\cal F}$-systems model abstracting from all the features of the language in which the represented sentences are expressed. All that is assumed is the existence of sentences and the binary relation `\ldots affirms the falsity of\ldots' among them. The possible existence of non-referential sentences is also considered. To model the sets of all the sentences that can jointly be valued as true we introduce the notion of \emph{conglomerate}, the existence of which guarantees the absence of paradox. Conglomerates also enable to characterize \emph{referential contradictions}, i.e. sentences that can only be false under a classical valuation due to the interactions with other sentences in the model. A Kripke's style fixed point characterization of groundedness is offered and fixed points which are complete (meaning that every sentence is deemed either true or false) and consistent (meaning that no sentence is deemed true and false) are put in correspondence with conglomerates. Furthermore, argumentation frameworks are special cases of ${\cal F}$-systems. We show the relation between  \emph{local conglomerates} and admissible sets of arguments and argue about the usefulness of the concept for argumentation theory.

\keywords{Liar paradox \and Yablo's paradox \and ${\cal F}$-system \and Conglomerate \and Groundedness \and Argumentation Framework}
\end{abstract}

\section{Introduction}
\label{intro}
Semantic paradoxes like the Liar and Yablo's involve sentences that predicate the falsity of other sentences. Cook \cite{cook_2004} introduced the novelty of using graph-theoretic tools for dealing with semantic paradoxes involving a falsity predicate. Rabern, Rabern and Macauley \cite{Rabern2013} coined the term `${\cal F}$-systems' to refer ``sentence systems which are restricted in such a way that all the sentences can only say that other sentences in the system are false''. The works by Cook and Rabern et al. concentrate on systems where \emph{every} sentence affirms the falsity of some other sentence(s). That is represented by serial or sink-free digraphs (roughly, every node ``shoots'' at least one arrow). On the other hand, Beringer and Schindler \cite{beringer_schindler_2017} also consider sentences that do not refer to other sentences. We will follow this last approach since it is more general and, particularly, enables to take into account the interaction among object language sentences (like `Snow is white') and metalanguage sentences (like `The sentence `Snow is white' is false'). However, we bring the analysis to a more abstract level where the specificities of the underlying languages  are (or the hierarchy of languages is) irrelevant to find the graph-theoretic patterns that characterize paradox. The aim is to get the simplest model that enables that. All we need is a set $S$ of nodes, which represent primitive entities we call \emph{sentences} (indeed, they can be understood as names of sentences of a given language) and a binary relation $F\subseteq S\times S$, i.e. a set of directed edges or arrows such that for any pair of sentences $x$ and $y$ of $S$, $(x, y)\in F$ is understood as `$x$ says that $y$ is false'. In this way, for example, we can model the relationship between the (English) sentences  `Snow is white' and  `The sentence `Snow is white' is false', through an ${\cal F}$-system $\cal F$ $=$ $\langle S, F\rangle$, where $S=\{a, b\}$ and $F=\{(b, a)\}$, and where $a$ and $b$ represent `Snow is white' and  `The sentence `Snow is white' is false', respectively.

Semantic paradoxes are sets of sentences to which it is not possible to assign a classical truth-value (true/false) to all of them at the same time. We will represent the assignment of truth-values through \emph{labellings}\footnote{We borrow the term `labelling' from \cite{caminada2006}. In \cite{beringer_schindler_2017}, the term `decoration' is used instead.}  on the nodes of the ${\cal F}$-systems, in such a way that every node can be labeled with $\tt T$ (for $\tt T$rue), $\tt F$ (for $\tt F$alse) or $\tt U$ (for $\tt U$ndetermined). Paradoxical ${\cal F}$-systems will be such that every labelling can only put the label $\tt U$ on some nodes.  ``Classical'' labellings, i.e. those that can put $\tt T$ or $\tt F$ on every sentence, will be put in correspondence with graph-theoretic patterns that we will call \emph{conglomerates}. 
The notion of conglomerate extends that of \emph{kernel} used by Cook, which represent a subset of sentences that can be true together. Kernels are suitable for capturing classic assignments of truth values in systems where each sentence refers to other sentences. But if we use this notion in systems that include sinks, kernels will only allow them to be represented as true. Conglomerates, although they will lead to similar formal results regarding paradoxes, will enable a more intuitive representation since object language sentences will be considered with any truth-value. 

Another aim of this work is to define a Kripke's style  fixed point operator to characterize groundedness in ${\cal F}$-systems \cite{Kripke1975}. Grounded sentences are, roughly, those which truth-value can be tracked through the reference path until a sentence with a definite truth-value (the ``ground''). We will define complete and consistent fixed points (meaning that every sentence is deemed either true or false and no sentence is deemed true and false) and show that they correspond exactly to conglomerates.

Conglomerates also enable to characterize \emph{referential contradictions (tautologies)}, i.e. sentences that can only be false (true) under a classical valuation, due to the interactions with other sentences in the model. This is an advantage with respect to kernels, which are unable to do that.

Finally, we define \emph{local conglomerates}, a notion that enables to cover and extend that of \emph{admissibility} in  Dung's argumentation frameworks.  As shown by Dyrkolbotn \cite{dyrkolbotn2012}, argumentation frameworks are special cases of ${\cal F}$-systems where arguments play the role of sentences and $F$ is interpreted as an attack relation. Admissibility formalizes the idea of sets of arguments that can be defended between them. Local conglomerates cover that idea and give it a twist: they deem ``admissible'' also sets of  arguments that can be defended together against any argument, except those that promote some non-preferred value, which suggests a new semantics for value-based argumentation frameworks \cite{bench-capon}.

The paper is organized as follows. In Section \ref{sec2} we define abstract ${\cal F}$-systems, labellings, and the notion of conglomerate. In Section \ref{ground} we give a fixed-point characterization of groundedness, and show the relation of conglomerates with complete and consistent fixed points. In Section \ref{refcon} we define referential contradictions and tautologies, and show that the transitivity of $F$ in non-paradoxical systems is a sufficient condition for their existence. Moreover, transitivity --as shown by Cook \cite{cook_2004}-- can also be a source of paradox as well as odd-length cycles. We comment on those points in Section \ref{suf}. In Section \ref{dung} we introduce the notion of local conglomerate and apply it to cover and extend that of admissibility in Dung's argumentation frameworks. Final conclusions are summarized in Section \ref{conc}.

\section{Abstract ${\cal F}$-systems}
\label{sec2}

\begin{definition} An (\emph{abstract}) ${\cal F}$-$system$ is a pair $\cal F$ $=$ $\langle S, F\rangle$ where $S$ is a set which elements are primitive entities called \emph{sentences} and $F\subseteq S\times S$ is a binary relation among sentences.
\end{definition}

\noindent For every $x\in S$, we define $\overrightarrow{F}(x)=\{y\in S: (x, y)\in F\}$ and $\overleftarrow{F}(x)=\{y\in S: (y, x)\in F\}$, and for every subset $A\subseteq S$, $\overrightarrow{F}(A)$ $=$ $\bigcup_{x\in A} \overrightarrow{F}(x)$ and $\overleftarrow{F}(A)$ $=$ $\bigcup_{x\in A} \overleftarrow{F}(x)$. If $\overrightarrow{F}(x)=\emptyset$, $x$ is said to be a \emph{sink}, and we define $sinks(A)$ $=$ $\{x\in A: x$ is a sink$\}$.  In order to avoid misrepresentations, we assume that non-sink sentences do not assert anything more than what is represented in $F$ (and, naturally, sink sentences do not assert anything about other sentences). To illustrate the kind of issues we want to avoid, consider the following example. Let ${\cal F}$ $=$ $(\{x, y\}, \{(y, x)\})$. Then we want to interpret that $x$ is true if $y$ is false and $x$ is false if $y$ is true; moreover, we want to interpret that if $x$ has an undetermined truth value then the value of $y$ is undetermined, too. However, if we accept the interpretation: $x$ $=$ `Snow is red' and $y$ $=$ `$x$ is false and the snow is blue', then the above considerations about the truth and falsity of $x$ and $y$ would not be valid, since $x$ and $y$ could both be false. Though it is true that $y$ affirms the falsity of $x$, the component `the snow is blue' of $y$ which is ``hidden'' in the representation can yield anomalous interpretations. The abstract level of the model does not allow to represent such molecular sentences since there are no elements to represent logical connectives. Hence, we leave that kind of interpretations out of the scope of the model. On the other hand, the only molecular sentences that can be represented in the model, preserving the intuitions about the assignment of truth values, are conjunctions of falsity assertions about other sentences like, for instance, `$x$ says that both $y$ and $z$ are false', which can be modeled as $\{(x, y), (x, z)\}\subseteq F$.\\

Since ${\cal F}$-systems define digraphs, we can see the assignment of truth values to the sentences as \emph{labels} on the nodes of a digraph. We consider three labels, $\tt{T}$, $\tt{F}$ and $\tt{U}$, for $true$, $false$ and $undetermined$, respectively. The non-classical value $undetermined$ is intended to express either that the actual value is unknown (as in the case of conjectures) or just the impossibility of assigning a classical truth value (as in the case of paradoxes).

\begin{definition}
Given ${\cal F}=\langle S, F\rangle$, a \emph{labelling} on ${\cal F}$ is a total function $L$ such that:
\begin{enumerate}
  \item $L:$ $S$ $\rightarrow$ $\{\tt{T}, \tt{F}, \tt{U}\}$, and
  \item for all $x\in S\setminus sinks(S)$
  \begin{enumerate}  
    \item $L(x)=\tt{F}$ iff $L(z)=\tt{T}$ for some $z\in \overrightarrow{F}(x)$, and
    \item $L(x)=\tt{T}$ iff $L(z)=\tt{F}$ for every $z\in \overrightarrow{F}(x)$.
	\end{enumerate}
\end{enumerate}
\end{definition}

\noindent Note that the assignment of values to sink nodes is unrestricted. Moreover, for every ${\cal F}$-system there always exist a labelling that labels all the nodes with $\tt{U}$.

\begin{definition}
A labelling $L$ on ${\cal F}$ is \emph{classical} iff for every $x\in S$, $L(x)\neq\tt{U}$. 
\end{definition}

\noindent Paradoxes in ${\cal F}$-systems can be characterized as follows:

\begin{definition}
An ${\cal F}$-system is \emph{paradoxical} iff it has no classical labellings. Moreover, a sentence $x$ is paradoxical iff $L(x)=\tt{U}$ for every labelling $L$. 
\end{definition}

\begin{example}
Let ${\cal F}=\langle\{a_{k}\}_{_{k\in\mathbb{N}}}, \{(a_{k}, a_{m})\}_{_{k<m}}\rangle$ (representing the Yablo's paradox). Then ${\cal F}$ does not have any classical labelling.
\end{example}

Classical labellings determine partitions of the set of sentences in which there are two subsets, representing the true and the false sentences, respectively. If the system is paradoxical then we cannot have any such partition. Another way of characterizing the subsets of true and false sentences is by defining suitable properties. We introduce here the notion of \emph{conglomerate}.

\begin{definition} Given ${\cal F}$ $=$ $\langle S, F\rangle$, a \emph{conglomerate} is a 
subset $A\subseteq S$ that satisfies:
\begin{enumerate}
  \item\label{se2} $\overleftarrow{F}(A)\subseteq S\setminus A$, and
  \item\label{se3} $(S\setminus A)\setminus sinks(S)\subseteq \overleftarrow{F}(A)$   
\end{enumerate}
\end{definition}

The idea is that a conglomerate coalesce all the sentences that can share the true value, leaving outside all and only the sentences that can share the false value. So, conglomerates can only exist in systems which sentences can be ``polarized'' into true and false. A conglomerate can also be understood in a Kripkean way as defining the extension of the truth predicate of the underlying language, being its complement in the system the anti-extension (we will return to this point in Section \ref{ground}). If a conglomerate exists, then we can say that the truth predicate is completely defined in the system. Since a conglomerate $A$ is supposed to comprise all the true sentences, the condition 1 says that it cannot contain two sentences such that one affirms the falsity of the other (i.e. $A$ is \emph{independent}). And $S\setminus A$ is supposed to comprise all the false sentences, so the condition 2 says that every non-sink sentence must assert the falsity of at least one true sentence (i.e. $A$ \emph{absorbs} every external non-sink node). This is different from kernels, which absorb every outer node. This implies that kernels comprise all the sinks, so these can only be interpreted as true sentences in such a model. In this sense, conglomerates seem to be more suitable than kernels to represent the Kripkean view: sinks may or may not belong to the conglomerates, representing object language sentences that may or may not be true. Furthermore, the notion of conglomerate clearly also encompasses that of kernel.

\begin{example}\label{ab}
Let ${\cal F}=\langle\{a, b\}, \{(b, a)\}\rangle$. Assume that ${\cal F}$ represents the relation between $a$: `I am wearing a hat' and $b$: `The sentence `I am wearing a hat' is false'. ${\cal F}$ has only one kernel, $\{a\}$, but it has two conglomerates, $\{a\}$, deeming `I am wearing a hat' as true and `The sentence `I am wearing a hat' is false' as false, and $\{b\}$, deeming `I am wearing a hat' as false and `The sentence `I am wearing a hat' is false' as true. Every kernel is a conglomerate, but not vice versa.
\end{example}


The notion of conglomerate is not well-defined, in the sense that some ${\cal F}$-systems have no conglomerates. As expected, those systems are the paradoxical ones.

\begin{example}
Let ${\cal F}=\langle\{a\}, \{(a, a)\}\rangle$ (the Liar paradox). Then ${\cal F}$ does not have any conglomerate.
\end{example}

\noindent The correspondence between conglomerates and classical labellings is easy to prove:

\begin{theorem}\label{theo}
$L$ is a classical labelling iff $A=\{x: L(x)=\tt{T}\}$ is a conglomerate.
\end{theorem}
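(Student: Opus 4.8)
The plan is to prove the biconditional by establishing both directions directly from the definitions, exploiting the fact that for a classical labelling $L$ every node gets either $\tt{T}$ or $\tt{F}$, so that $S\setminus A = \{x : L(x) = \tt{F}\}$ whenever $A = \{x : L(x) = \tt{T}\}$. First I would assume $L$ is classical and show $A$ is a conglomerate. For condition~\ref{se2}, take $y \in \overleftarrow{F}(A)$, so $(y,x)\in F$ for some $x$ with $L(x)=\tt{T}$; since $x \in \overrightarrow{F}(y)$ and $L(x)=\tt{T}$, clause 2(a) of the labelling definition forces $L(y)=\tt{F}$ (note $y$ cannot be a sink here, as it has an outgoing edge), hence $y \notin A$. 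For condition~\ref{se3}, take $y \in (S\setminus A)\setminus sinks(S)$; then $L(y)\neq\tt{T}$, and since $L$ is classical, $L(y)=\tt{F}$, so by clause 2(a) there is some $z \in \overrightarrow{F}(y)$ with $L(z)=\tt{T}$, i.e. $z \in A$, which gives $y \in \overleftarrow{F}(A)$.

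For the converse, assume $A$ is a conglomerate and define $L$ by $L(x)=\tt{T}$ if $x\in A$ and $L(x)=\tt{F}$ if $x\notin A$. This $L$ is classical by construction, so the work is checking it is a genuine labelling, i.e. that clauses 2(a) and 2(b) hold for every non-sink $x$. I would verify the ``iff'' in clause 2(a): if $L(x)=\tt{F}$ then $x\notin A$; if additionally $x$ is not a sink, condition~\ref{se3} gives $x \in \overleftarrow{F}(A)$, so some $z\in\overrightarrow{F}(x)$ lies in $A$, i.e. $L(z)=\tt{T}$. Conversely if some $z\in\overrightarrow{F}(x)$ has $L(z)=\tt{T}$, then $z\in A$ and $x\in\overleftarrow{F}(A)$, so by condition~\ref{se2} $x\notin A$, hence $L(x)=\tt{F}$. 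Clause 2(b) then follows by taking contrapositives together with classicality: $L(x)=\tt{T}$ iff $L(x)\neq\tt{F}$ iff (by 2(a)) it is not the case that some $z\in\overrightarrow{F}(x)$ has $L(z)=\tt{T}$, iff every $z\in\overrightarrow{F}(x)$ has $L(z)=\tt{F}$.

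I do not expect a serious obstacle here; the statement is essentially an unwinding of the two definitions. The one point requiring mild care is the treatment of sinks: in the forward direction one must observe that any node in $\overleftarrow{F}(A)$ is automatically a non-sink, so clause 2(a) applies to it, and in both directions one must remember that the labelling constraints are imposed only on $S\setminus sinks(S)$ while the conglomerate conditions explicitly quarantine $sinks(S)$ in condition~\ref{se3} — the two exclusions match up exactly, which is what makes the correspondence clean. A second small point is that classicality of $L$ is used essentially in the forward direction to conclude $L(y)=\tt{F}$ from $L(y)\neq\tt{T}$; without it the argument would break.
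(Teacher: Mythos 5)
Your proof is correct and follows essentially the same route as the paper: the forward direction verifies independence and absorption exactly as in the paper's "only if" part, and the converse constructs the two-valued labelling from the conglomerate (which the paper dismisses as "trivial" but you spell out via clauses 2(a) and 2(b)). No gaps; your extra care about sinks and about classicality being needed to pass from $L(y)\neq\tt{T}$ to $L(y)=\tt{F}$ is exactly the right bookkeeping.
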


\begin{proof} Let ${\cal F}=\langle S, F\rangle$.\\
\noindent (If) Let $A$ be a conglomerate of ${\cal F}$. Let $L$ be such that $\forall x (x\in A\rightarrow L(x)=\tt{T})$ and $\forall x (x\in S\setminus A\rightarrow L(x)=\tt{F})$. Then $L$ trivially satisfies the conditions of a classical labelling.\\
\noindent (Only if) Let $L$ be a classical labelling of ${\cal F}$ and let $A$  $=$ $\{x: L(x)=\tt{T}\}$ and $B$  $=$ $\{x: L(x)=\tt{F}\}$. (i) By definition, if $L(x)=\tt{T}$ then for all $z$ such that $z\in \overrightarrow{F}(x)$, $L(z)=\tt{F}$. Hence, by construction, $x\in A$ and $z\in B$. Then, for all $x,z\in A$, $z\notin \overrightarrow{F}(x)$. (ii) By definition, for all $x\in S$, if $L(x)=\tt{F}$ and $x$ is not a sink, then there exists some $z\in \overrightarrow{F}(x)$ such that $L(z)=\tt{T}$. Hence, by hypothesis, $x\in B$ and $z\in A$. Therefore, given (i) and (ii) we have that $A$ is a conglomerate.
\end{proof}

\begin{corollary}
${\cal F}$ is paradoxical iff it does not have any classical labelling.
\end{corollary}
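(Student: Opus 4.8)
The plan is to read this statement as an immediate consequence of the definition of a paradoxical ${\cal F}$-system together with Theorem~\ref{theo}. By definition, ${\cal F}$ is paradoxical exactly when it admits no classical labelling, so at the purely literal level there is nothing to prove; the point worth recording here is the equivalent reformulation in terms of conglomerates that Theorem~\ref{theo} makes available, and the proof should make this link explicit rather than merely echo the definition.

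First I would invoke Theorem~\ref{theo}: the assignment $L\mapsto\{x:L(x)=\tt{T}\}$ carries every classical labelling of ${\cal F}$ to a conglomerate, and conversely every conglomerate $A$ is witnessed by the classical labelling that assigns $\tt{T}$ to the members of $A$ and $\tt{F}$ to the members of $S\setminus A$. Hence the two assertions ``${\cal F}$ has a classical labelling'' and ``${\cal F}$ has a conglomerate'' are logically equivalent statements about ${\cal F}$.

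Next I would pass to contrapositives: ${\cal F}$ has no classical labelling if and only if ${\cal F}$ has no conglomerate. Chaining this equivalence with the definition of paradoxicality gives: ${\cal F}$ is paradoxical if and only if ${\cal F}$ has no classical labelling, if and only if ${\cal F}$ has no conglomerate. As a sanity check, the Liar system $\langle\{a\},\{(a,a)\}\rangle$ and Yablo's system, both exhibited above as lacking conglomerates, are thereby confirmed to be paradoxical, consistent with the earlier remark that the ${\cal F}$-systems without conglomerates are ``as expected'' the paradoxical ones.

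I do not expect any genuine obstacle: the corollary is bookkeeping layered on top of Theorem~\ref{theo} and the definitions. The only place to stay alert is the role of sinks, since sink nodes may be labelled arbitrarily by a classical labelling and need not belong to a conglomerate; but this freedom is already absorbed inside the proof of Theorem~\ref{theo}, so no additional case analysis is required at the level of the corollary.
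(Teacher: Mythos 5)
Your proposal is correct and matches the paper's intent: the corollary as literally stated merely repeats the definition of paradoxicality, and the substantive content (``paradoxical iff no conglomerate'') is exactly what falls out of Theorem~\ref{theo} by the bijection between classical labellings and conglomerates, which is how the paper (which offers no separate proof) means it to be read. Your remark that the freedom in labelling sinks is already handled inside Theorem~\ref{theo} is also accurate, so no further work is needed.
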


\section{Groundedness}\label{ground}

The truth value of sentences asserting the falsity of other sentences depends on the truth value of the referred sentences. If the truth value of a sentence does not depend on that of other sentences, ``so that the truth value of the original statement can be ascertained, we call the original sentence \emph{grounded}, otherwise \emph{ungrounded}'' (Kripke, 1975: 694). In our framework, sentences at sink nodes (for instance, object language sentences) do not depend on other sentences in that sense, so their truth value depend on material (contingencies) or formal (tautologies or contradictions) facts that are exogenous to the model.  Taking as grounded all the sinks that are either true or false, the groundedness of all the remaining sentences of an ${\cal F}$-system will be determined in an iterated process very similar to Kripke's. In the base case, all the sinks determined as true belong to a set $S^{+}_{0}$  and all those determined as false belong to a set $S^{-}_{0}$.  That is, the \emph{partial set} $(S^{+}_{0}, S^{+}_{0})$ models the interpretation of the sink sentences. The systems considered by Cook and Rabern et al. are sink-free, hence no sentence is grounded in the above sense in those systems. Beringer and Schindler, on the other hand, consider the existence of sinks (representing true arithmetical sentences) and so their system constitute a special instance of an abstract ${\cal F}$-system in which groundedness can be tracked by dependence on the sinks.

\begin{definition}
Given ${\cal F}$ $=$ $\langle S, F\rangle$, a pair $(S^{+}_{0}$, $S^{-}_{0})$ is a \emph{ground base} iff $S^{+}_{0}\cup S^{-}_{0}= sinks(S)$ and $S^{+}_{0}$ $\cap$ $S^{-}_{0}$ $=$ $\emptyset$.
\end{definition}


Then, we can find the other grounded sentences by iterated applications of the following operator:

\begin{definition}
Given two subsets $S^{+}, S^{-}\subseteq S$, we define $\phi((S^{+}, S^{-}))$ $=$ $(S'^{+}, S'^{-})$, where
\begin{itemize}
\item[ ]  $S'^{+}=sinks(S^{+})\cup \{x: \emptyset\neq\overrightarrow{F}(x)\subseteq S^{-}\}$, and
\item[ ]  $S'^{-}=sinks(S^{-})\cup \{x: \emptyset\neq\overrightarrow{F}(x)\cap S^{+}\}$.
\end{itemize}
\end{definition}

\noindent That is, $S'^{+}$ includes the sinks that are already known as true plus all the sentences that only affirm the falsity of sentences already known as false; and $S'^{-}$ includes the sinks that are already known as false  plus all the  sentences that affirm the falsity of some sentence already known as true. So, starting from any ground base $(S^{+}_{0}$, $S^{-}_{0})$, iterated applications of $\phi$ will lead to a fixed point. A fixed point is any pair $(S^{+}, S^{-})$ $=$ $\phi((S^{+}, S^{-}))$. The fixed point $(S^{+}, S^{-})$ $=$ $\phi^{\infty}((S^{+}_{0}$, $S^{-}_{0}))$ reached by the above mentioned iteration procedure is the least one relative to the ground base $(S^{+}_{0}$, $S^{-}_{0})$, in the sense that any other fixed point $(S'^{+}, S'^{-})$ where $S^{+}_{0}\subseteq S'^{+}$ and $S^{-}_{0}\subseteq S'^{-}$ is such that $S^{+}\subseteq S'^{+}$ and $S^{-}\subseteq S'^{-}$. The operator $\phi$ is monotone. Let $(S^{+}, S^{-})\leq (S'^{+}, S'^{-})$ iff $S^{+}\subseteq S'^{+}$ and $S^{-}\subseteq S'^{-}$. Then:

\begin{remark}
(Monotony) If $(S^{+}, S^{-})$ $\leq$ $(S'^{+}, S'^{-})$ then $\phi((S^{+},$ $S^{-}))$ $\leq$ $\phi((S'^{+},$ $S'^{-}))$.
\end{remark}

\begin{proof}
Assume $(S^{+}, S^{-})\leq (S'^{+}, S'^{-})$. Let $\phi((S^{+},$ $S^{-}))$ $=$ $(T^{+},$ $T^{-})$ and $\phi((S'^{+}, S'^{-}))$ $=$ $(T'^{+}, T'^{-})$. Assume now $w\in T^{+}$ and $z\in T^{-}$. We have to prove that 1) $w\in T'^{+}$ and 2) $z\in T'^{-}$.\\
1) If $w\in S^{+}$ the result is obvious. Assume now $w\not\in S^{+}$. Then $w\in \{x: x\not\in S_{0}$ and $\forall y(y\in \overrightarrow{F}(x)\rightarrow y\in S^{-})\}$. Then, $w\not\in S_{0}$ and $\forall y(y\in \overrightarrow{F}(w)\rightarrow y\in T^{-})$. Therefore, $w\in T'^{+}$.\\
2) If $z\in S^{-}$ the result is obvious. Assume now $z\not\in S^{-}$. Then $z\in \{x: x\not\in S_{0}$ and $\exists y(y\in \overrightarrow{F}(x)\wedge y\in S^{+})\}$. Then, $z\not\in S_{0}$ and $\exists y(y\in \overrightarrow{F}(z)\wedge y\in T^{+})$. Therefore, $z\in T'^{-}$.
\end{proof}

\noindent The existence of the least fixed point is guaranteed by the monotony of $\phi$ and the fact that all the fixed points (relative to the same ground base) form a complete lattice (by Tarski's fixed points theorem).

Now, some ground bases can deem all the sentences grounded and others not. Consider the following example:

\begin{example}\label{relgr}
Let ${\cal F}$ $=$ $\langle\{a, b, c\}, \{(b, a), (b, c), (c, b)\}\rangle$. The only sink $a$ determines two possible ground bases: (1) $(\{a\}, \emptyset)$, and (2) $(\emptyset, \{a\})$. For (1), all $a$, $b$, and $c$ are grounded (true, false, and true, respectively), and for (2), only $a$ is grounded (false). By way of illustration, the least fixed point in each case is reached as follows:\\

\begin{tabular}{l l}
(1) 	& $\phi((\{a\}, \emptyset))$$=$$(\{a\}, \{b\})$ \\ 
	& $\phi^{2}((\{a\}, \emptyset))$$=$$(\{a, c\}, \{b\})$ \\  
	& \vdots \\
	& $\phi^{\infty}((\{a\}, \emptyset))$$=$$(\{a, c\}, \{b\})$\\ \\

(2) 	& $\phi((\emptyset, \{a\}))$$=$$(\emptyset, \{a\})$ \\ 
	& \vdots \\
	& $\phi^{\infty}((\emptyset, \{a\}))$$=$$(\emptyset, \{a\})$
\end{tabular}
\end{example}

\noindent Since an ${\cal F}$-system can have different ground bases, some of them can deem all the sentences grounded but others not. The following notion of groundedness takes into account those possibilities:

\begin{definition}
${\cal F}$ $=$ $\langle S, F\rangle$ is (\emph{relatively}) \emph{grounded} iff for every $x\in S$ and for (some) every ground base $(S^{+}_{0}, S^{-}_{0})$, $x\in S^{+}$ $\cup$ $S^{-}$, where $(S^{+}, S^{-})$ $=$ $\phi^{\infty}((S^{+}_{0}$, $S^{-}_{0}))$.
\end{definition}

The system in Example \ref{relgr} is relatively grounded but not grounded. On the other hand, ${\cal F}$-systems representing paradoxes as the Liar or Yablo's are neither grounded nor relatively grounded, as expected. Other systems are not grounded nor relatively grounded either, though they are not paradoxical. For instance,  ${\cal F}$ $=$ $\langle\{a, b\},$ $\{(a, b),$ $(b, a)\}\rangle$. Since there are no sinks, the only ground base is $(\emptyset, \emptyset)$, and $\phi^{\infty}((\emptyset, \emptyset))$ $=$ $(\emptyset, \emptyset)$ is the least fixed point. However, other fixed points like $(\{a\}, \{b\})$ and $(\{b\}, \{a\})$ imply that $a$ and $b$ can be consistently assigned a classical truth value.  This makes the difference with paradoxical systems, where no fixed point represents a consistent assignment of truth values to all the sentences.\\

The conglomerates of an ${\cal F}$-system can be characterized by means of the fixed points of $\phi$ as follows:

\begin{definition}
We say that $(S^{+}, S^{-})$ is \emph{complete} iff for every $x\in S$, $x\in S^{+}$ $\cup$ $S^{-}$, and \emph{consistent} iff $S^{+}$ $\cap$ $S^{-}$ $=$ $\emptyset$.
\end{definition}

\begin{theorem}\label{theo2}
$A$ is a conglomerate iff  $(A, S\setminus A)$ is a complete and consistent fixed point of $\phi$.
\end{theorem}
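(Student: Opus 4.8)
The plan is to observe first that, because $A$ and $S\setminus A$ partition $S$, the pair $(A,S\setminus A)$ is \emph{automatically} complete (as $A\cup(S\setminus A)=S$) and consistent (as $A\cap(S\setminus A)=\emptyset$); so the whole content of the theorem reduces to the equivalence: $A$ is a conglomerate iff $(A,S\setminus A)=\phi((A,S\setminus A))$. Unfolding the definition of $\phi$, this fixed-point equation is exactly the conjunction of the two set identities
\[
A=sinks(A)\cup\{x:\emptyset\neq\overrightarrow{F}(x)\subseteq S\setminus A\}
\]
and
\[
S\setminus A=sinks(S\setminus A)\cup\{x:\overrightarrow{F}(x)\cap A\neq\emptyset\},
\]
so the task is to show that these two identities together are equivalent to conditions \ref{se2} and \ref{se3} of the definition of conglomerate. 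Throughout I would use that $sinks(A)=A\cap sinks(S)$ and $sinks(S\setminus A)=(S\setminus A)\cap sinks(S)$ partition $sinks(S)$.

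For the direction \emph{conglomerate} $\Rightarrow$ \emph{fixed point}, assume $A$ satisfies \ref{se2} (independence, $\overleftarrow{F}(A)\subseteq S\setminus A$) and \ref{se3} (absorption of external non-sinks, $(S\setminus A)\setminus sinks(S)\subseteq\overleftarrow{F}(A)$). I would establish the first identity by double inclusion. For ``$\subseteq$'': if $x\in A$ is a sink it lies in $sinks(A)$; if it is not a sink then independence \ref{se2} forbids $\overrightarrow{F}(x)\cap A\neq\emptyset$ (otherwise $x\in\overleftarrow{F}(A)\subseteq S\setminus A$, contradicting $x\in A$), so $\emptyset\neq\overrightarrow{F}(x)\subseteq S\setminus A$ and $x$ is in the right-hand set. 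For ``$\supseteq$'': $sinks(A)\subseteq A$ trivially, and if $\emptyset\neq\overrightarrow{F}(x)\subseteq S\setminus A$ then $x\notin S\setminus A$, because by \ref{se3} every non-sink of $S\setminus A$ must point into $A$ — hence $x\in A$. The second identity is proved symmetrically, with the roles of \ref{se2} and \ref{se3} interchanged: $\overrightarrow{F}(x)\cap A\neq\emptyset$ yields $x\in\overleftarrow{F}(A)\subseteq S\setminus A$ by \ref{se2}, while any $x\in S\setminus A$ is either a sink, hence in $sinks(S\setminus A)$, or a non-sink, hence in $\overleftarrow{F}(A)$ by \ref{se3}, i.e.\ satisfies $\overrightarrow{F}(x)\cap A\neq\emptyset$.

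For the converse, \emph{fixed point} $\Rightarrow$ \emph{conglomerate}, I would read the two conglomerate conditions directly off the second fixed-point identity. Condition \ref{se2}: if $x\in\overleftarrow{F}(A)$ then $\overrightarrow{F}(x)\cap A\neq\emptyset$, so $x$ belongs to the right-hand side of the identity for $S\setminus A$, i.e.\ $x\in S\setminus A$. Condition \ref{se3}: if $x\in(S\setminus A)\setminus sinks(S)$ then $x$ is a non-sink element of $S\setminus A$, so it is not in $sinks(S\setminus A)$ and therefore lies in $\{z:\overrightarrow{F}(z)\cap A\neq\emptyset\}$, which is precisely the statement $x\in\overleftarrow{F}(A)$. (The first fixed-point identity is not needed here, though it gives an alternative route.)

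I do not anticipate a real obstacle: the argument is a term-by-term matching of the two clauses defining a conglomerate against the two equations defining a fixed point of $\phi$, and completeness and consistency are built into the ``$(A,S\setminus A)$'' shape of the pair. The only points demanding care are bookkeeping ones — placing the sinks of $S$ lying inside $A$ (resp.\ inside $S\setminus A$) on the correct side, which happens automatically thanks to the $sinks(\cdot)$ summands in $\phi$ and to the explicit removal of $sinks(S)$ in condition \ref{se3}; and not conflating ``$x$ is a sink'' ($\overrightarrow{F}(x)=\emptyset$) with ``$\overrightarrow{F}(x)\cap A=\emptyset$'', a confusion ruled out by the clause $\emptyset\neq\overrightarrow{F}(x)$ in the definition of $S'^{+}$. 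If desired, one could shorten part of the verification by invoking Theorem \ref{theo}: a conglomerate induces the classical labelling that is already a fixed point of $\phi$ relative to the ground base it determines; but the direct computation above is the most transparent.
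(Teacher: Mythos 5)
Your proof is correct and takes essentially the same route as the paper's: unfold the fixed-point equation of $\phi$ at $(A, S\setminus A)$ and match the two resulting set identities against the independence and absorption clauses in the definition of conglomerate. The only difference is organisational — you observe that completeness and consistency are automatic for a pair of the form $(A, S\setminus A)$ and read both conglomerate conditions off the second identity, whereas the paper derives independence by a reductio invoking consistency; both verifications are sound.
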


\begin{proof}
(Only if) Let $A$ be a conglomerate of ${\cal F}$ $=$ $\langle S, F\rangle$. Let $(S^{+}, S^{-})$ $=$ $\phi((A, S\setminus A))$. Then we have:\\
(i) $S^{-}$ $=$ $sinks(S\setminus A)\cup \{x: \exists y (y\in \overrightarrow{F}(x) \wedge y\in A)\}$, and\\
(ii) $S^{+}$ $=$ $sinks(A)\cup \{x: x\not\in sinks(S)$ and $\forall y(y\in \overrightarrow{F}(x)\rightarrow\ y\in S\setminus A)\}$.\\
To see that $(A, S\setminus A)$ is a fixed point of $\phi$ we have to prove that $S^{+}=A$ and $S^{-}= S\setminus A$. From (i), it follows that $S^{-}= S\setminus A$, since $A$ absorbs every non-sink node and, obviously, every sink of $S\setminus A$ belongs to $S\setminus A$. Let us prove now that $S^{+}=A$. (1) $S^{+}\subseteq A$: By \emph{reductio}, assume that there exists $x\not\in A$ such that $x$ is not a sink and $\forall y(y\in \overrightarrow{F}(x)\rightarrow\ y\in S\setminus A)$. But then $x$ is not absorbed by $A$, which contradicts that $A$ is a conglomerate. (2) $A\subseteq S^{+}$: Of course, $sinks(A)\subseteq S^{+}$. Let now $x\in A$ be a non-sink node. From the definition of conglomerate, $\overleftarrow{F}(A)\subseteq S\setminus A$. Therefore, it is clear that $\forall y(y\in \overrightarrow{F}(x)\rightarrow\ y\in S\setminus A)$. Finally, the fact that $(A, S\setminus A)$ is a complete and consistent fixed point is obvious.\\
(If) Let  $(S^{+}, S^{-})$ be a complete and consistent fixed point of ${\cal F}$ $=$ $\langle S, F\rangle$. By way of the absurd, assume $z\in \overrightarrow{F}(x)$ and $x,z\in S^{+}$. Then $\phi((S^{+}, S^{-}))$ $=$ $(S'^{+}, S'^{-})$ is such that $x\in S'^{+}$ and $x\in S'^{-}$. But this contradicts the consistency property. Therefore, (i) for all $x,z\in S^{+}$, $z\notin \overrightarrow{F}(x)$. Now, since $(S^{+}, S^{-})$ is complete and consistent it follows that $S^{-}=S\setminus S^{+}$. By this and because $(S^{+}, S^{-})$ is a fixed point of $\phi$, we have that $S\setminus S^{+}$ $=$ $sinks(S^{-})$ $\cup$ $\{x: \exists y(y\in \overrightarrow{F}(x)\wedge y\in S^{+})\}$, which in turn implies that (ii) $(S\setminus S^{+})\setminus sinks(S)$ $=$ $\{x: \exists y(y\in \overrightarrow{F}(x)\wedge y\in S^{+})\}$ $\subseteq$ $\overleftarrow{F}(A)$. Therefore, from (i) and (ii) and by definition, $S^{+}$ is a conglomerate.
\end{proof}

\begin{example}\label{relgr2} (Continuation of Example \ref{relgr})
There are three conglomerates, $\{a, c\}$, $\{b\}$, and $\{c\}$, that can be put in correspondence with the fixed points $(\{a, c\},$ $\{b\})$, $(\{b\},$ $\{a, c\})$, and $(\{c\}, \{a, b\})$, respectively. 
\end{example}

\section{Sentences with intrinsic value: referential contradictions and tautologies}\label{refcon}

We have seen that some ${\cal F}$-systems, by their own structural nature, have sentences that can only be labeled as undetermined ($\tt{U}$): those just characterized as paradoxical. In addition, some systems have sentences that can only be labeled as true ($\tt{T}$) and others as false ($\tt{F}$) by every classical labelling. We will say that those sentences are \emph{referential tautologies}/\emph{contradictions},  meaning that their truth/falsity is due to structural conditions of the system. 

\begin{definition}
Given ${\cal F}$ $=$ $\langle S, F\rangle$, $x\in S$ is a \emph{referential contradiction} (\emph{tautology}) iff $L(x)=\tt{F}$ ($L(x)=\tt{T}$), for every classical labelling $L$.
\end{definition}

As a consequence, referential contradictions and tautologies are related to conglomerates in the following way:

\begin{proposition}
Let ${\cal F}$ $=$ $\langle S, F\rangle$ be non-paradoxical. Then $x\in S$ is a referential contradiction (tautology) iff for every conglomerate $A$, $x\notin A$ ($x\in A)$.
\end{proposition}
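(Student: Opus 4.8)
The plan is to reduce the statement to Theorem~\ref{theo}, which already establishes the bijective correspondence between classical labellings $L$ and conglomerates $A = \{x : L(x) = \tt{T}\}$. Since ${\cal F}$ is non-paradoxical, by the Corollary it has at least one classical labelling, so the family of conglomerates is non-empty and quantifying over it is meaningful.

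First I would prove the contradiction case. For the ``only if'' direction, suppose $x$ is a referential contradiction, and let $A$ be any conglomerate. By Theorem~\ref{theo} there is a classical labelling $L$ with $A = \{y : L(y) = \tt{T}\}$; since $L(x) = \tt{F}$ by hypothesis, we get $x \notin A$. For the ``if'' direction, suppose $x \notin A$ for every conglomerate $A$, and let $L$ be any classical labelling. Then $A = \{y : L(y) = \tt{T}\}$ is a conglomerate (Theorem~\ref{theo}), so $x \notin A$, meaning $L(x) \neq \tt{T}$; since $L$ is classical, $L(x) \neq \tt{U}$, hence $L(x) = \tt{F}$. As this holds for every classical labelling, $x$ is a referential contradiction.

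The tautology case is symmetric and I would handle it the same way, swapping the roles of $\tt{T}$ and $\tt{F}$ and of $x \in A$ and $x \notin A$: $x$ is a referential tautology iff $L(x) = \tt{T}$ for every classical $L$ iff $x \in \{y : L(y) = \tt{T}\}$ for every classical $L$ iff $x \in A$ for every conglomerate $A$, again invoking Theorem~\ref{theo} in both directions to move between labellings and conglomerates.

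There is no real obstacle here; the only point needing care is the use of non-paradoxicality. It is needed so that the ``for every conglomerate'' conditions are not vacuously satisfied: without a classical labelling there would be no conglomerates, and then every sentence would trivially satisfy ``$x \notin A$ for every conglomerate $A$'' as well as ``$x \in A$ for every conglomerate $A$'', which would wrongly classify every sentence as both a referential contradiction and a referential tautology. I would make this dependence explicit in the write-up, noting that non-paradoxicality guarantees at least one conglomerate exists.
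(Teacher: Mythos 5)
Your proof is correct and takes exactly the paper's route: the paper disposes of this proposition with the single line that it ``follows immediately from Theorem~\ref{theo}'', and your argument is just that immediate consequence written out, using the correspondence between classical labellings and conglomerates in both directions. One small quibble: your closing remark about non-paradoxicality is slightly off, since in a paradoxical system the labelling-side definition of referential contradiction/tautology is \emph{also} vacuously satisfied, so the biconditional would still hold vacuously; the hypothesis is there to make the notions meaningful rather than to save the equivalence.
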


\begin{proof}
It follows immediately from Theorem \ref{theo}.
\end{proof}




\noindent Referential tautologies are just sentences that assert the falsity of a contradiction. However, referential contradictions can exist independently of referential tautologies.

\begin{proposition}
Given ${\cal F}$ $=$ $\langle S, F\rangle$, if $x\in S$ is a referential tautology, then there exists some $z\in S$ such that $z$ is a referential contradiction (and $z\in \overrightarrow{F}(x)$).
\end{proposition}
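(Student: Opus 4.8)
The plan is to read the conclusion almost directly off the definition of a classical labelling, once we know that a referential tautology is not a sink. First I would dispose of the degenerate case in which $\cal F$ has no classical labelling at all: then every sentence is vacuously both a referential tautology and a referential contradiction, so it suffices to exhibit \emph{any} $z\in\overrightarrow{F}(x)$, and such a $z$ exists precisely because $x$ is not a sink. So assume $\cal F$ has at least one classical labelling and that $x$ is a non-sink referential tautology. Fix an arbitrary classical labelling $L$. Since $L$ is classical and $x$ is a referential tautology, $L(x)=\tt{T}$; since $x\notin sinks(S)$, clause 2(b) of the definition of a labelling then forces $L(z)=\tt{F}$ for every $z\in\overrightarrow{F}(x)$. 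As $L$ was an arbitrary classical labelling, this shows that \emph{every} $z\in\overrightarrow{F}(x)$ satisfies $L(z)=\tt{F}$ under all classical labellings, i.e.\ every such $z$ is a referential contradiction. Because $x$ is not a sink, $\overrightarrow{F}(x)\neq\emptyset$, so picking any $z\in\overrightarrow{F}(x)$ yields the required sentence, which moreover lies in $\overrightarrow{F}(x)$.

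An equivalent, more ``conglomerate-flavoured'' route (available when $\cal F$ is non-paradoxical, so that the preceding Proposition applies) is: $x$ belongs to every conglomerate $A$; each conglomerate is independent, i.e.\ $\overleftarrow{F}(A)\subseteq S\setminus A$, whence $\overrightarrow{F}(x)\cap A=\emptyset$ (otherwise $z\in\overrightarrow{F}(x)\cap A$ would put $x\in\overleftarrow{F}(A)\cap A$); so every $z\in\overrightarrow{F}(x)$ lies outside every conglomerate and is therefore a referential contradiction by the same Proposition. Either way the core argument is very short, so I would expect the author's proof to be a one- or two-liner appealing to the remark just above the statement (``referential tautologies are just sentences that assert the falsity of a contradiction'').

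The step that genuinely requires care — and the one I expect to be the crux — is justifying that a referential tautology $x$ is not a sink, since this is exactly what makes $\overrightarrow{F}(x)$ nonempty and makes clause 2(b) applicable. For a sink, clause 2 imposes no local constraint on $L(x)$, yet the value of $L(x)$ can still be pinned down by the requirement that $L$ extend to a classical labelling of the whole system (for instance, when a non-sink asserting the falsity of $x$ is itself forced to be false on structural grounds). So the statement is to be understood for non-sink $x$, in line with the preceding remark, and the cleanest presentation is to invoke that remark (or the standing reading that ``referential'' properties concern non-sink sentences); spelling out why the problematic sinks fall outside the intended scope is the part deserving the most attention.
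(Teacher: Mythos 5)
Your core argument is the same as the paper's: from $L(x)=\mathtt{T}$ for every classical labelling together with $x\notin sinks(S)$, clause 2(b) of the labelling definition forces $L(z)=\mathtt{F}$ for every $z\in\overrightarrow{F}(x)$ in every classical labelling, and non-sinkhood gives $\overrightarrow{F}(x)\neq\emptyset$, so any such $z$ is the desired referential contradiction. Where you diverge is precisely the step you flag as the crux. The paper does not restrict to non-sink $x$: it asserts that a referential tautology cannot be a sink, on the grounds that a sink ``could be labeled'' otherwise by some classical labelling, and your suspicion about that step is well founded. Since clause 2 constrains only non-sinks, a sink's value can still be pinned down globally: in $S=\{x,y\}$ with $F=\{(y,x),(y,y)\}$ the unique classical labelling is $L(x)=\mathtt{T}$, $L(y)=\mathtt{F}$, so the sink $x$ is a referential tautology with $\overrightarrow{F}(x)=\emptyset$, and the parenthetical part of the proposition fails for it (only the weaker conclusion, that some referential contradiction exists in $S$, survives --- here $y$). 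So your move of restricting the statement to non-sink $x$ is a repair that proves a correctly scoped but strictly weaker claim, whereas the paper's proof purports to cover sinks via a justification that does not hold in general. One internal wrinkle on your side: in the degenerate paradoxical case you say the required $z$ exists ``because $x$ is not a sink'', but in that case every sentence, sinks included, is vacuously a referential tautology, so that case needs the same scope restriction; your closing paragraph effectively concedes this, and with the restriction in place both of your cases go through.
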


\begin{proof}
By definition, if $x$ is a referential tautology then $L(x)=\tt{T}$ for every classical labelling $L$. By definition of labelling, $x$ cannot be a sink (otherwise it could be labeled with $\tt{T}$ by some classical labelling). Hence, there exists $z\in \overrightarrow{F}(x)$ and, obviously, $L(z)=\tt{F}$ in every classical labelling $L$.
\end{proof}

Note that referential contradictions and tautologies are not related to kernels in the same way as to conglomerates.

\begin{example}\label{ex7}
Let ${\cal F}$ $=$ $\langle\{a, b, c\}, \{(a, b), (b, c), (a, c)\}\rangle$. There exist two conglomerates, $\{c\}$ and $\{b\}$, deeming $a$, which is not paradoxical, as a referential contradiction. The only kernel $\{c\}$ is useless to capture that fact.
\end{example}

The previous example also shows that transitivity is a source of referential contradictions when the antecedent conditions of the transitive property are met in non-paradoxical systems.

\begin{proposition}
Let ${\cal F}$ $=$ $\langle S, F\rangle$ be non-paradoxical and $F$ be transitive. If $x, y, z\in S$ are such that $(x, y),$ $(y, z)\in F$, then $x$ is a referential contradiction. 
\end{proposition}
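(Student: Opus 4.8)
The plan is to argue directly on classical labellings. Fix an arbitrary classical labelling $L$ of $\mathcal F$ --- such a labelling exists because $\mathcal F$ is non-paradoxical, which is exactly what makes the conclusion non-trivial --- and show that $L(x)=\tt{F}$. Since $L$ is classical we already have $L(x)\neq\tt{U}$, so it suffices to rule out $L(x)=\tt{T}$; doing this for every classical $L$ gives, by the definition of referential contradiction, that $x$ is one.

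First I would extract the single consequence of transitivity that is needed: for every $w\in\overrightarrow{F}(y)$ we have $(y,w)\in F$, and together with $(x,y)\in F$ transitivity gives $(x,w)\in F$, hence $\overrightarrow{F}(y)\subseteq\overrightarrow{F}(x)$; moreover $y\in\overrightarrow{F}(x)$ and $z\in\overrightarrow{F}(y)$, so neither $x$ nor $y$ is a sink and clauses 2(a)--2(b) of the definition of labelling apply to both of them.

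The core is a short reductio. Suppose $L(x)=\tt{T}$. By clause 2(b) applied to the non-sink $x$, every $w\in\overrightarrow{F}(x)$ gets $L(w)=\tt{F}$; in particular $L(y)=\tt{F}$ (since $y\in\overrightarrow{F}(x)$) and $L(w)=\tt{F}$ for every $w\in\overrightarrow{F}(y)$ (since $\overrightarrow{F}(y)\subseteq\overrightarrow{F}(x)$). But then the non-sink $y$ has all of its out-neighbours labelled $\tt{F}$, so clause 2(b) forces $L(y)=\tt{T}$, contradicting $L(y)=\tt{F}$. Hence $L(x)=\tt{T}$ is impossible, so $L(x)=\tt{F}$, and since $L$ was an arbitrary classical labelling, $x$ is a referential contradiction.

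I do not expect a genuine obstacle here; the argument is short. The only points requiring care are (i) checking that $x$ and $y$ are non-sinks before invoking clauses 2(a)/2(b), and (ii) being explicit that the non-paradoxicality hypothesis is used solely to guarantee that a classical labelling exists, so that the statement does not hold merely vacuously. As an alternative one could run the same computation through the preceding Proposition via conglomerates: if some conglomerate $A$ contained $x$, then independence of $A$ (condition 1) would give $\overrightarrow{F}(x)\subseteq S\setminus A$, hence $\overrightarrow{F}(y)\subseteq S\setminus A$ and $y\notin A$; but $y$ is a non-sink, so condition 2 of the definition of conglomerate would force $\overrightarrow{F}(y)\cap A\neq\emptyset$, a contradiction. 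I would present the labelling version, as it is the most direct.
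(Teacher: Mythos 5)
Your proof is correct and follows essentially the same route as the paper: fix an arbitrary classical labelling and use transitivity (via $\overrightarrow{F}(y)\subseteq\overrightarrow{F}(x)$) to force $L(x)=\tt{F}$. The only difference is cosmetic — you run a reductio from $L(x)=\tt{T}$, while the paper cases on $L(z)$ and $L(y)$ and in each case exhibits a true out-neighbour of $x$ — and your sanity checks (non-sinkhood of $x,y$, the role of non-paradoxicality) are exactly the right ones.
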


\begin{proof}
Assume the antecedent of the claim.  By the transitivity of $F$, $(x, z)\in F$. Let $L$ be a classical labelling of ${\cal F}$ (which exists due to the non-paradoxicality of ${\cal F}$). Then either (i) $L(z)=\tt T$ or (ii) $L(z)=\tt F$. If (i) is the case then $L(x)=\tt F$.  If (ii) is the case then either (a) $L(y)=\tt T$ or (b) $L(y)=\tt F$. If (a) is the case then $L(x)=\tt F$, and if (b) is the case then there exists $w\in S$ such that $(y, w)\in F$ and $L(w)=\tt T$. But then, by transitivity, $(x, w)\in F$. That implies that $L(x)=\tt F$. Hence, in any case $L(x)=\tt F$. Therefore, $x$ is a referential contradiction.
\end{proof}

In addition, if transitivity is satisfied by systems where every sentence refers to other sentences we get paradox, as we will see in the next section.

\section{Sufficient conditions for paradox}\label{suf}

Rabern et al. (\cite{Rabern2013}) identify structural properties of the digraphs as necessary conditions for paradox. The conditions are, basically, the existence of directed cycles (as in the case of the Liar) or double paths\footnote{A double path is a graph consisting of two non-trivial paths, both with common origin and end.} (as in the case of the Yablo's paradox). However, those conditions are not sufficient. In this section we show some sufficient conditions present in the literature and establish another one related to odd-length cycles.

\subsection{Transitivity}

$F$ is transitive iff for all $x, y, z\in S$, if $y\in \overrightarrow{F}(x)$ and $z\in \overrightarrow{F}(y)$ then $z\in \overrightarrow{F}(x)$.  For example, $F$ is transitive in Example \ref{ex7}. In that system, note that $a$ cannot be labelled with $\tt{T}$, but it can be labeled with $\tt{F}$ whenever $c$ is labeled with $\tt{T}$ or $\tt{F}$ (i.e. $a$ is a referential contradiction). Moreover, if \emph{every} sentence refers to the falsity of other sentence, then transitivity will also prevent the assignment of the $\tt F$ label. This result was showed by Cook \cite{cook_2004}.

\begin{definition}
${\cal F}=\langle S, F\rangle$ is \emph{unlimited transitive} iff (i) $F$ is transitive and (ii) ${\cal F}$ is a serial digraph (i.e. no node is a sink).
\end{definition}

\begin{proposition}
(Cook) If ${\cal F}$ is unlimited transitive then it is paradoxical.
\end{proposition}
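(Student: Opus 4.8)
The plan is to argue by contradiction: assume ${\cal F}=\langle S,F\rangle$ is unlimited transitive but admits a classical labelling $L$, and derive an absurdity. (I would assume $S\neq\emptyset$; the empty system is trivially non-paradoxical via the vacuous labelling, so this case is either tacitly excluded in Cook's setting or should be noted separately.)

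First I would show that, under these hypotheses, no sentence can receive the label $\tt{T}$. Suppose $L(x)=\tt{T}$ for some $x\in S$. Since ${\cal F}$ is serial, $\overrightarrow{F}(x)\neq\emptyset$, so pick $z\in\overrightarrow{F}(x)$; clause (2b) of the definition of labelling (its ``only if'' direction, applied to $L(x)=\tt{T}$) forces $L(z)=\tt{F}$. Again by seriality, $z$ is not a sink, so clause (2a) applied to $z$ yields some $w\in\overrightarrow{F}(z)$ with $L(w)=\tt{T}$. Now transitivity of $F$ gives $w\in\overrightarrow{F}(x)$, whence clause (2b) applied to $x$ forces $L(w)=\tt{F}$, contradicting $L(w)=\tt{T}$. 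This single step is where seriality and transitivity combine, and it is the crux of the whole argument.

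Having established that $L$ assigns $\tt{T}$ to no sentence, and knowing that $L$ is classical (so it assigns $\tt{U}$ to no sentence either), I conclude that $L(x)=\tt{F}$ for every $x\in S$. But then take any $x\in S$ (possible since $S\neq\emptyset$): $x$ is not a sink, so from $L(x)=\tt{F}$ and clause (2a) there must exist some $z\in\overrightarrow{F}(x)$ with $L(z)=\tt{T}$, contradicting the previous paragraph. Hence ${\cal F}$ has no classical labelling and is therefore paradoxical.

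I do not anticipate a genuine obstacle, since the argument is short; the point requiring care is the first step, where one must invoke the biconditional labelling clauses in the right direction — from $L(z)=\tt{F}$ to the \emph{existence} of a $\tt{T}$-successor of $z$ — and then use transitivity to pull that successor into $\overrightarrow{F}(x)$ so that the two prescriptions for $L(w)$ collide. A secondary point worth flagging in the write-up is the degenerate case $S=\emptyset$, which should be excluded or handled explicitly.
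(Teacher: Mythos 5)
Your proof is correct and takes essentially the same route as the paper's: the crux --- seriality gives a $\tt{T}$-labelled successor of a $\tt{F}$-labelled successor, and transitivity pulls it into $\overrightarrow{F}(x)$ to clash with clause (2b) --- is identical, and your packaging (``no sentence is $\tt{T}$, hence all are $\tt{F}$, contradiction'') versus the paper's case split on an arbitrary sentence with the $\tt{F}$ case reduced to the $\tt{T}$ case is an inessential variation. Your flag about the degenerate case $S=\emptyset$ is a fair point the paper leaves tacit.
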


\begin{proof}
Assume ${\cal F}=\langle S, F\rangle$ is not paradoxical. Then it has a classical labelling $L$. Then, for every $x\in S$, either $L(x)=\tt T$ or $L(x)=\tt F$. Assume $L(x)=\tt T$. Then, for all $y\in\overrightarrow{F}(x)$, $L(y)=\tt F$. Let now $y\in\overrightarrow{F}(x)$. Then, for some $z\in\overrightarrow{F}(y)$, $L(z)=\tt T$. But, by transitivity, $z\in\overrightarrow{F}(x)$, which implies that $L(z)=\tt F$. Contradiction. Assume now $L(x)=\tt F$. Then, for some $y\in\overrightarrow{F}(x)$, $L(y)=\tt T$. Then we can apply on $y$ the same argument as before to get a contradiction. Therefore, ${\cal F}$ is paradoxical.
\end{proof}

\noindent Both the Liar and Yablo's paradoxes can be modeled as unlimited transitive ${\cal F}$-systems.

\subsection{Odd-length cycles}

Unlike the Yablo's paradox, the Liar paradox contains a referential cycle. But not every referential cycle leads to paradox. Next we  define some conditions involving cycles that suffice to yield paradox.

\begin{definition}
Given ${\cal F}$ $=$ $\langle S, F\rangle$, the subset $O\subseteq S$ is an \emph{odd core} of ${\cal F}$ iff for some $n\geq 0$ there exist sentences $x_{1},\ldots, x_{2n+1}$ $\in$ $S$ such that $O$ $=$ $\{x_{1},\ldots, x_{2n+1}\}$, $\{(x_{i}, x_{i+1}): 1\leq i\leq 2n\}\cup\{(x_{2n+1}, x_{1})\}$ $\subseteq$ $F$, and for all $x\in O$, $|F(x)|= 1$. Moreover, ${\cal F}$ is \emph{odd} iff it has an odd core. 
\end{definition}

\noindent Informally, the definition says that an odd ${\cal F}$-system is such that there exists an odd-length cycle in $F$, the nodes of which can shoot exactly one arrow each (no matter how many arrows point to them).

\begin{proposition}
If ${\cal F}$ is odd then it is paradoxical.
\end{proposition}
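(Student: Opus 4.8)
The claim is that if $\mathcal{F}$ has an odd core $O = \{x_1,\dots,x_{2n+1}\}$ with each node in $O$ having out-degree exactly $1$, then $\mathcal{F}$ has no classical labelling. I would argue by contradiction: suppose $L$ is a classical labelling. The key observation is that for a node $x \in O$ with $\overrightarrow{F}(x) = \{x_{i+1}\}$ (indices mod $2n+1$), the labelling conditions collapse to a simple negation: since the unique target of $x$ is $x_{i+1}$, condition (a) says $L(x)=\mathtt{F}$ iff $L(x_{i+1})=\mathtt{T}$, and condition (b) says $L(x)=\mathtt{T}$ iff $L(x_{i+1})=\mathtt{F}$. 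Because $L$ is classical, $L(x_{i+1}) \in \{\mathtt{T},\mathtt{F}\}$, so these two conditions together force $L(x) \ne L(x_{i+1})$ — the label must strictly alternate along the cycle.

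The main step is then to derive the contradiction from this alternation around an odd-length cycle. Walking the cycle $x_1 \to x_2 \to \cdots \to x_{2n+1} \to x_1$, alternation gives $L(x_1) \ne L(x_2) \ne \cdots \ne L(x_{2n+1}) \ne L(x_1)$. Since there are only two classical values, $L(x_j)$ equals $L(x_1)$ when $j$ is odd and the opposite value when $j$ is even; in particular $L(x_{2n+1}) = L(x_1)$ (as $2n+1$ is odd). But the edge $(x_{2n+1}, x_1) \in F$ together with out-degree $1$ forces $L(x_{2n+1}) \ne L(x_1)$ — contradiction. Hence no classical labelling exists and $\mathcal{F}$ is paradoxical.

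I would present this cleanly by first isolating a small lemma-like remark: \emph{if $\overrightarrow{F}(x) = \{y\}$ and $L$ is classical, then $L(x)$ and $L(y)$ are distinct elements of $\{\mathtt{T},\mathtt{F}\}$}. This follows directly from Definition of labelling (note $x$ is not a sink since it has a successor). Applying this to every edge of the odd cycle yields a two-coloring of an odd cycle with a proper alternation constraint, which is the classical impossibility.

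The only subtlety — and the one place to be careful rather than a genuine obstacle — is the bookkeeping with indices modulo $2n+1$ and the degenerate case $n = 0$, where $O = \{x_1\}$ with the single edge $(x_1, x_1)$: here the alternation constraint reads $L(x_1) \ne L(x_1)$ directly, which is immediately absurd. So the argument is uniform, and I would just note that $n=0$ recovers the Liar. No heavy machinery is needed; the proof is essentially the observation that proper $2$-coloring fails on an odd cycle, transported through the labelling conditions.
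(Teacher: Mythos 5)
Your proof is correct. The core combinatorial content is the same as the paper's---labels (or membership) must strictly alternate along the cycle because each node of the odd core has out-degree exactly one, and an odd cycle admits no such alternation---but you carry the argument out directly on classical labellings, using only the labelling conditions from Definition~2, whereas the paper argues by contradiction with a hypothetical \emph{conglomerate}, using its independence and absorption conditions to force the odd-indexed nodes into $A$ and then violate independence at the closing edge, and then invokes the equivalence between conglomerates and classical labellings (Theorem~\ref{theo}) to conclude paradoxicality. Your route is slightly more self-contained, since it does not rely on Theorem~\ref{theo} or on the conglomerate machinery, and your isolated remark (if $\overrightarrow{F}(x)=\{y\}$ and $L$ is classical then $L(x)\neq L(y)$) makes the parity obstruction transparent, including the degenerate Liar case $n=0$; the paper's version, in exchange, keeps the result inside the conglomerate framework that the rest of the paper builds on (e.g.\ the subsequent fixed-point and local-conglomerate results). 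Either way the statement is established; just make sure, as you do, to note that nodes of the odd core are non-sinks so the labelling conditions apply to them.
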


\begin{proof}
Let ${\cal F}$ $=$ $\langle S, F\rangle$ be odd and let $O$ $=$ $\{x_{1},\ldots, x_{2n+1}\}$ be an odd core of ${\cal F}$. By way of contradiction, let us assume that $A$ is a conglomerate of ${\cal F}$. Assume, without lost of generality, that $x_{1}\in A$. Then, since for all $x\in O$, $|F(x)|= 1$, by the  absorption condition we have  $\{x_{1}, x_{3}, \ldots, x_{2n+1}\}\subseteq A$. But, since $(x_{2n+1}, x_{1})\in F$, that contradicts the independence property of $A$. Hence, we should have $x_{1}\in S\setminus A$. This implies that $\{x_{2}, x_{4}, \ldots, x_{2n}, x_{1}\}\subseteq A$. But, since $(x_{1}, x_{2})\in F$, we get again a contradiction with the independence property. Therefore, ${\cal F}$ cannot have any conglomerate, which means that it is paradoxical.
\end{proof}

As a consequence, we can see that the source of paradox in the Liar paradox is twofold: the ${\cal F}$-system is both unlimited transitive and odd. Yablo's paradox, in turn, only suffers  from unlimited transitivity.

Finally, it is worth mentioning that Dyrkolbotn \cite{dyrkolbotn2012} resumes some known sufficient conditions to avoid paradox in presence of odd-length cycles. In our framework, that result can be expressed as follows:

\begin{proposition}
(Dyrkolbotn) Any ${\cal F}$-system has a conglomerate if every odd-length cycle has one of the following
\begin{enumerate}
  \item at least two symmetric nodes
  \item at least two crossing consecutive chords (a \emph{chord} is an arrow on a cycle connecting two non-consecutive nodes)
  \item at least two chords with consecutive targets.
\end{enumerate}
\end{proposition}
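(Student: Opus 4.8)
The plan is to reduce the statement to the classical problem of \emph{kernel existence} in digraphs. Recall from Section~\ref{sec2} that a kernel of $\langle S, F\rangle$ is an independent set $K$ that absorbs \emph{every} node outside it, and that the notion of conglomerate encompasses that of kernel: independence is exactly condition~\ref{se2}, while absorbing all outer nodes gives condition~\ref{se3} \emph{a fortiori} (a conglomerate need only absorb the non-sink outer nodes). Hence it suffices to show that, under any one of the three alternative hypotheses on odd-length directed cycles, the digraph $\langle S, F\rangle$ possesses a kernel; that kernel is then the desired conglomerate.

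First I would dispose of the degenerate case in which $F$ has no directed cycle of odd length at all: there the existence of a kernel is Richardson's theorem, and the hypothesis of the proposition is vacuously met. In the general case, the three listed configurations --- two symmetric nodes on the cycle, two ``crossing'' consecutive chords, or two chords with consecutive targets --- are precisely the hypotheses under which the Richardson-type kernel-existence results of the kernel-theory literature apply, and in whose argumentation-theoretic reformulation they are collected by Dyrkolbotn~\cite{dyrkolbotn2012}. The proof I would follow is the standard induction on $|S|$: select a ``good'' odd cycle $C$ witnessing one of the three conditions; use that condition to pass to a strictly smaller digraph $\langle S', F'\rangle$ that still satisfies the hypothesis --- typically by contracting $C$ or by deleting a carefully chosen vertex, the chosen configuration ensuring that no \emph{new} bad odd cycle is created; obtain a kernel $K'$ of $\langle S', F'\rangle$ by the inductive hypothesis; and lift $K'$ to a kernel $K$ of $\langle S, F\rangle$, the three configurations being exactly what makes this lifting possible. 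For infinite $S$ (e.g.\ a Yablo-like system) one extends this either by transfinite induction along a well-ordering compatible with $F$ or by a compactness argument over the finite induced subsystems, after checking that the hypothesis is inherited by them.

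The main obstacle is that the digraph-theoretic fact being invoked is itself substantial: a self-contained proof of each of the three sufficient conditions is essentially the content of \cite{dyrkolbotn2012} (resting on Richardson's theorem and its refinements), so here I would confine myself to the reduction above together with the citation, exactly as the analogous propositions earlier in this section do for Cook's results. The two points that do require genuine care in the reduction are: (a) verifying that ``kernel $\Rightarrow$ conglomerate'' survives the presence of sinks --- which it does, since a sink necessarily lies in every kernel whereas a conglomerate is free to place it on either side; and (b) reading ``odd-length cycle'' in the hypothesis as a \emph{directed} cycle of odd length, so that the statement matches the kernel-existence theorems verbatim. The translation is immediate because an abstract ${\cal F}$-system is nothing but a digraph and, as noted after the Dung correspondence, argumentation frameworks --- on which Dyrkolbotn's formulation is phrased --- are precisely the special case in question.
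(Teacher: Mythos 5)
The paper itself offers no proof of this proposition: it is stated purely as a citation of Dyrkolbotn, who collects the classical kernel-existence theorems (Richardson's theorem and its refinements by Duchet and Galeana-S\'anchez--Neumann-Lara). Your reduction is therefore in the same spirit as the paper's, and its core step is sound: every kernel is a conglomerate (independence is condition~1, and absorbing \emph{all} outer nodes implies condition~2 a fortiori, sinks being forced inside a kernel but not a conglomerate), so any hypothesis guaranteeing a kernel guarantees a conglomerate. Your points (a) and (b) about sinks and about reading the cycles as directed are exactly the right things to check.

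There is, however, one step that fails: your closing claim that the infinite case can be recovered ``by transfinite induction \ldots or by a compactness argument over the finite induced subsystems.'' The paper's own Yablo system ${\cal F}=\langle\{a_{k}\}_{k\in\mathbb{N}}, \{(a_{k}, a_{m})\}_{k<m}\rangle$ refutes this: it is acyclic, so the hypothesis on odd-length cycles is vacuously satisfied, and every finite induced subdigraph is acyclic and hence has a kernel by Richardson's theorem; yet the whole system is paradoxical and so has no conglomerate (Theorem~\ref{theo}), let alone a kernel. Thus kernel existence does not pass from finite induced subsystems to the infinite system, and no well-ordering trick can repair a statement that is simply false for infinite ${\cal F}$-systems as literally phrased. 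The proposition must be read, as in Dyrkolbotn's digraph setting, as a statement about finite ${\cal F}$-systems (or at least with some finiteness hypothesis replacing the naive extension), and you should either delete the infinite-case sentence or state this restriction explicitly; with that amendment, your reduction plus the citation does establish the result in the intended finite setting.
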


\section{Local conglomerates and their relation with admissible sets in Dung's argumentation frameworks}\label{dung}

Conglomerates capture all the sentences that can be true together in systems that are free of paradoxes. In terms of Dyrkolbotn \cite{dyrkolbotn2012}, the absorption condition on conglomerates is \emph{global}, and that inhibit some systems of having conglomerates. But we can still want to know what sentences can be true together in systems containing paradoxes, even if that class is empty. That can be done by defining a \emph{local} version of the absorption condition. To that aim, in this section we define the notion of \emph{local conglomerate}.

P.M. Dung \cite{dung1995} has defined \emph{argumentation frameworks}, which are just special cases of ${\cal F}$-systems where $S$ is interpreted as a set of \emph{arguments} and $F$ as an \emph{attack} relation. The notion of \emph{admissibility} captures the idea of sets of arguments that can be defended jointly: $A\subseteq S$ is \emph{admissible} iff 1) $\overrightarrow{F}(A)\subseteq S\setminus A$ (i.e. the arguments that $A$ attacks are outside $A$), and  2) $\overleftarrow{F}(A)\subseteq\overrightarrow{F}(A)$ (i.e. $A$ attacks all its attackers).  In graph theory, and replacing $F$ with $F^{-1}$, that notion is known with the name of \emph{semikernel} \cite{GaleanaLara}. Following the same motivation that led us to replace the notion of kernel with that of conglomerate, we are going to replace the notion of semikernel with the following:

\begin{definition}
Given ${\cal F}$ $=$ $\langle S, F\rangle$, a \emph{local conglomerate} is a subset $A\subseteq S$ that satisfies:
\begin{enumerate}
  \item $\overleftarrow{F}(A)\subseteq S\setminus A$, and
  \item $\overrightarrow{F}(A)\setminus sinks(S)\subseteq\overleftarrow{F}(A)$.
\end{enumerate}
\end{definition}

It is easy to see that conglomerates are also local conglomerates, which enables to establish a hierarchy among the notions, including kernels. Local conglomerates only absorb those non-sink nodes to which they point at. As a consequence, the empty set of nodes is always a local conglomerate, which implies that the notion is well-defined: every ${\mathcal F}$-system has at least one local conglomerate. Moreover, due to the fact that local conglomerates are local ``absorbers'', they capture all the sentences that can have a classical truth value in contexts where paradoxes can be isolated.

\begin{example}
Let  ${\cal F}$ $=$ $\langle S, F\rangle$ $=$ $\langle\{a, b, c, d\}$, $\{(a, b),$ $(b, c),$ $(a, c),$ $(d, d)\}\rangle$. Then ${\cal F}$ has no conglomerates but has two non-empty local conglomerates, $\{b\}$ and $\{c\}$. They can be matched with the fixed points  $(\{b\}, \{a, c\})$ and $(\{c\}, \{a, b\})$, respectively.   Clearly, the paradoxical sentence $d$, which is not connected to the other sentences, is not present in those fixed points.
\end{example}

In Dung's argumentation frameworks, maximally (w.r.t. $\subseteq$) admissible sets of arguments are called \emph{preferred extensions}. Analogously, we can think of the maximal (w.r.t. $\subseteq$) local conglomerates of an ${\cal F}$-system, i.e. subsets of sentences that can all be true together at the same time and, if so, the remaining sentences of the system must necessarily be untrue. These can be put in correspondence with consistent fixed points that are maximal w.r.t. $\leq$.

\begin{theorem}\label{theo3}
$A$ is a maximal (w.r.t. $\subseteq$) local conglomerate iff $(A, \overrightarrow{F}(A)\cup \overleftarrow{F}(A))$ is a consistent maximal (w.r.t. $\leq$) fixed point of $\phi$.
\end{theorem}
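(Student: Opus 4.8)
The plan is to reduce everything to one auxiliary fact and a small ``absorption'' lemma. The auxiliary fact is the local analogue of the ``if'' part of Theorem~\ref{theo2}: \emph{the first coordinate of any consistent fixed point of $\phi$ is a local conglomerate}. Indeed, if $(C,D)=\phi((C,D))$ and $C\cap D=\emptyset$, then no $y\in C$ can have an arrow into $C$ (else $\overrightarrow{F}(y)\cap C\neq\emptyset$, so $y\in D$), which is condition~1; and if $w\in\overrightarrow{F}(C)$ is a non-sink, picking $c\in C$ with $w\in\overrightarrow{F}(c)$ we get $\overrightarrow{F}(c)\subseteq D$ (as $c$ is a non-sink in $C$ and $(C,D)$ is a fixed point), hence $w\in D$, and then the fixed-point equation for $D$ forces $\overrightarrow{F}(w)\cap C\neq\emptyset$, i.e.\ $w\in\overleftarrow{F}(C)$, which is condition~2. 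I will also use two elementary remarks: $\phi$ sends consistent pairs to consistent pairs, and the union of a $\subseteq$-chain of local conglomerates is a local conglomerate, so by Zorn's Lemma every local conglomerate is contained in a maximal one.

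For the ``only if'' direction let $A$ be a maximal local conglomerate and set $B=\overrightarrow{F}(A)\cup\overleftarrow{F}(A)$; consistency of $(A,B)$ is condition~1. Writing $\phi((A,B))=(A',B')$, the inclusions $A\subseteq A'$, $B\subseteq B'$ and $B'\subseteq B$ are routine from conditions 1--2 (each non-sink $x\in A$ has $\emptyset\neq\overrightarrow{F}(x)\subseteq\overrightarrow{F}(A)\subseteq B$; each non-sink of $\overrightarrow{F}(A)$ lies in $\overleftarrow{F}(A)$ by condition~2 and each sink of $\overrightarrow{F}(A)$ lies in $sinks(B)$; and $B'=sinks(B)\cup\overleftarrow{F}(A)\subseteq B$). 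The non-routine inclusion $A'\subseteq A$ follows from the \emph{absorption lemma}: if $x\notin A$ is a non-sink with $\emptyset\neq\overrightarrow{F}(x)\subseteq B$, then $A\cup\{x\}$ is again a local conglomerate, so maximality of $A$ forbids such $x$. To prove the lemma one first checks $x\notin\overleftarrow{F}(A)$ (an arrow from $x$ into $A$ would place an element of $A$ in $B$, contradicting $A\cap B=\emptyset$), then that no element of $A$ attacks $x$ and that $(x,x)\notin F$ (either would make $x$ a non-sink in $\overrightarrow{F}(A)$, hence in $\overleftarrow{F}(A)$ by condition~2, again a contradiction); conditions 1--2 for $A\cup\{x\}$ are then immediate from those for $A$ together with $\overrightarrow{F}(x)\subseteq B$. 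Thus $(A,B)=\phi((A,B))$.

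To finish the ``only if'' direction I check $(A,B)$ is $\leq$-maximal among consistent fixed points: if $(A',B')\geq(A,B)$ is a consistent fixed point, then $A'$ is a local conglomerate containing $A$ (auxiliary fact), so $A'=A$ by maximality; the fixed-point equation gives $B'=sinks(B')\cup\overleftarrow{F}(A)$, and any $w\in B'\setminus B$ would be a sink lying in neither $A$ nor $\overrightarrow{F}(A)$, whence $A\cup\{w\}$ is a strictly larger local conglomerate (the absorption lemma, now trivial since $w$ is a sink)---impossible; so $B'=B$. For the ``if'' direction, suppose $(A,\overrightarrow{F}(A)\cup\overleftarrow{F}(A))$ is a consistent $\leq$-maximal fixed point. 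By the auxiliary fact $A$ is a local conglomerate; if it were not maximal, extend it to a maximal local conglomerate $A^{*}\supsetneq A$, whence, by the ``only if'' direction, $(A^{*},\overrightarrow{F}(A^{*})\cup\overleftarrow{F}(A^{*}))$ is a consistent fixed point lying strictly above $(A,\overrightarrow{F}(A)\cup\overleftarrow{F}(A))$ (since $A\subsetneq A^{*}$ forces $\overrightarrow{F}(A)\cup\overleftarrow{F}(A)\subseteq\overrightarrow{F}(A^{*})\cup\overleftarrow{F}(A^{*})$), contradicting maximality. Hence $A$ is a maximal local conglomerate. The one genuinely delicate point is the inclusion $A'\subseteq A$: a priori $\phi$ might enlarge the ``true'' part, and it is exactly here that $\subseteq$-maximality is needed, through the absorption lemma; all the rest is bookkeeping with conditions 1 and 2.
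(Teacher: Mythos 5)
Your proof is correct, and it reorganizes the argument in a way that differs from the paper's. The first half of your ``if'' direction (any consistent fixed point has a local conglomerate as first coordinate) is in substance the paper's steps (i)--(ii), and your absorption lemma is essentially the ``fundamental lemma'' the paper states and proves right after Theorem~\ref{theo3}; the paper, however, does not isolate it inside the proof of the theorem, but simply asserts the key equivalence ``$x$ non-sink with $\emptyset\neq\overrightarrow{F}(x)\subseteq\overrightarrow{F}(A)\cup\overleftarrow{F}(A)$ iff $x\in A$'' from maximality. Where you genuinely diverge: (1) in the ``only if'' direction you explicitly verify that $(A,\overrightarrow{F}(A)\cup\overleftarrow{F}(A))$ is $\leq$-maximal among consistent fixed points (via $A'=A$ by the auxiliary fact and the sink argument for $B'\setminus B$), a step the paper's proof omits entirely; (2) for the maximality half of the ``if'' direction you extend $A$ to a maximal local conglomerate by Zorn's Lemma (using the chain-union observation) and then bootstrap from the already-proved ``only if'' direction, whereas the paper argues directly from an arbitrary larger local conglomerate $A'$ with a rather loosely specified pair $(A',B')$. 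Your route buys a cleaner and more complete argument (it also yields, as a by-product, that every local conglomerate extends to a maximal one), at the mild cost of invoking Zorn, which the paper's direct argument does not need; both of you correctly read ``consistent maximal fixed point'' as maximal among \emph{consistent} fixed points, which is forced since $(S,S)$ is always a fixed point. One micro-imprecision, easily repaired: in the absorption lemma a self-loop $(x,x)\in F$ does not literally place $x$ in $\overrightarrow{F}(A)$; rather it gives $x\in\overrightarrow{F}(x)\subseteq\overrightarrow{F}(A)\cup\overleftarrow{F}(A)$, and each disjunct contradicts $x\notin\overleftarrow{F}(A)$ (the second via condition~2).
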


\begin{proof}
(If) Let $(A, B)$ be a consistent maximal (w.r.t. $\leq$) fixed point of $\phi$. Then\\
(i) $\overleftarrow{F}(A)\subseteq S\setminus A$. To prove this, assume $x\in \overleftarrow{F}(A)$. By way of contradiction, assume now that $x\in A$ and let $z\in A$ be such that $z\in \overrightarrow{F}(x)$. Since $(A, B)$ is a fixed point, we have that $B$ $=$ $sinks(B)\cup\{x: \emptyset\neq \overrightarrow{F}(x)\cap A\}$. Then, $z\in B$. So, $z\in A\cap B$, which contradicts the consistency of $(A, B)$. Therefore, $x\not\in A$, which implies that $x\in S\setminus A$.\\
(ii) $\overrightarrow{F}(A)\setminus sinks(S)\subseteq \overleftarrow{F}(A)$. To prove this, let $u\in A$ and $w\in \overrightarrow{F}(u)\setminus sinks(S)$ (i.e. $w\in \overrightarrow{F}(A)\setminus sinks(S)$). Since $(A, B)$ is a fixed point, the hypothesis $w\not\in \overleftarrow{F}(A)$ implies that $w\not\in B$, since $B$ $=$ $sinks(B)\cup\{x: \emptyset\neq \overrightarrow{F}(x)\cap A\}$. But then $u\not\in A$, since $A$ $=$ $sinks(A)\cup\{x: \emptyset\neq \overrightarrow{F}(x)\subseteq B\}$. Contradiction. Therefore, $w\in \overleftarrow{F}(A)$.\\
From (i) and (ii) it follows that $A$ is a local conglomerate. Assume now that $A$ is not maximal, i.e. there exists some local conglomerate $A'$, such that $A\subset A'$.  Let $x\in A'\setminus A$. If $x$ is a sink then $x\in$ $sinks(A')\cup\{x: \emptyset\neq \overrightarrow{F}(x)\subseteq B\}$ $=$ $A'$. But this contradicts that $(A, B)$ is a maximal fixed point. Hence, $x$ is not a sink. Let $z\in \overrightarrow{F}(x)$. Then $z\in B'$, for some $B'$ such that $B\subseteq B'$. But then $z\in \overleftarrow{F}(A')$. Then $(A, B)$ $\leq$ $\phi((A', B'))$ but not $\phi((A', B'))$ $\leq$ $(A, B)$. This also contradicts that $(A, B)$ is a maximal fixed point.\\
(Only if) Let $A$ be a maximal (w.r.t. $\subseteq$) local conglomerate. We have to prove that $\phi((A, \overrightarrow{F}(A)\cup \overleftarrow{F}(A)))$ $=$ $(A, \overrightarrow{F}(A)\cup \overleftarrow{F}(A))$. This follows from: (i) $A=\{sinks(A)\cup\{x:  \emptyset\neq \overrightarrow{F}(x)\subseteq \overrightarrow{F}(A)\cup \overleftarrow{F}(A)\}$. For this, just observe that since $A$ is a maximal local conglomerate, if $x$ is not a sink then $x$ is such that $\emptyset\neq \overrightarrow{F}(x)\subseteq \overrightarrow{F}(A)\cup \overleftarrow{F}(A)$ iff $x\in A$. 
(ii) $\overrightarrow{F}(A)\cup \overleftarrow{F}(A)$ $=$ $sinks(\overrightarrow{F}(A)\cup \overleftarrow{F}(A))\cup\{x: \emptyset\neq \overrightarrow{F}(x)\cap A\}$. To prove this, note that since $\{x: \emptyset\neq \overrightarrow{F}(x)\cap A\}$ $=$ $\overleftarrow{F}(A)$, it is obvious that $sinks(\overrightarrow{F}(A)\cup \overleftarrow{F}(A))\cup\{x: \emptyset\neq \overrightarrow{F}(x)\cap A\}$ $\subseteq$ $\overrightarrow{F}(A)\cup \overleftarrow{F}(A)$. Now, since $A$ is a local conglomerate, $\overrightarrow{F}(A)\setminus sinks(\overrightarrow{F}(A))\subseteq \overleftarrow{F}(A)$, hence we have that $\overrightarrow{F}(A)\cup \overleftarrow{F}(A)$ $\subseteq$ $sinks(\overrightarrow{F}(A)\cup \overleftarrow{F}(A))\cup\{x: \emptyset\neq \overrightarrow{F}(x)\cap A\}$.

\end{proof}

The class of all the admissible sets of an argumentation framework form a complete partial order with respect to set inclusion (i.e. reflexive, transitive, antisymmetric, and every increasing sequence has a least upper bound). The ``fundamental lemma'' from which the result follows immediately in \cite{dung1995} can be paraphrased in our framework with the help of the operator $\phi$.

\begin{lemma}
Let $\phi((A,$ $\overrightarrow{F}(A)\cup \overleftarrow{F}(A)))$ $=$ $((S^{+}, S^{-}))$ and $A$ be a local conglomerate. Then for every $x, z\in S^{+}$, the set $A'=A\cup\{x\}$ is such that
\begin{enumerate}
  \item $A'$ is a local conglomerate.
  \item Let $\phi((A',$ $\overrightarrow{F}(A')\cup \overleftarrow{F}(A')))$ $=$ $(S'^{+},$ $S'^{-})$. Then $z\in S'^{+}$.
 \end{enumerate}
\end{lemma}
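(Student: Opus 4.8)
The plan is to prove the two parts in order, using at each step the characterization of $\phi$ on pairs of the form $(A, \overrightarrow{F}(A)\cup \overleftarrow{F}(A))$ that was unpacked in the proof of Theorem \ref{theo3}. Throughout, write $B=\overrightarrow{F}(A)\cup \overleftarrow{F}(A)$, so that by hypothesis $\phi((A,B))=(S^{+},S^{-})$ and, since $A$ is a local conglomerate, the computation in Theorem \ref{theo3} gives $S^{-}=sinks(B)\cup\{x:\emptyset\neq\overrightarrow{F}(x)\cap A\}=B$ and $S^{+}=sinks(A)\cup\{x:\emptyset\neq\overrightarrow{F}(x)\subseteq B\}\supseteq A$. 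The first thing to record is that the hypothesis ``$x\in S^{+}$'' means exactly that either $x$ is a sink in $A$ (in which case $A'=A$ and there is nothing to prove) or $x$ is a non-sink with $\emptyset\neq\overrightarrow{F}(x)\subseteq B=\overrightarrow{F}(A)\cup\overleftarrow{F}(A)$; and ``$z\in S^{+}$'' similarly gives $\overrightarrow{F}(z)\subseteq B$ or $z$ a sink. Also note $S^{+}\cap S^{-}$ need not be empty in general, but the consistency we need locally will come from the local conglomerate conditions, not from completeness.

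For part 1, I must verify the two defining conditions of a local conglomerate for $A'=A\cup\{x\}$. For condition 1, $\overleftarrow{F}(A')\subseteq S\setminus A'$: suppose $w\in\overleftarrow{F}(A')$, i.e. $w$ attacks some element of $A'$. If $w$ attacks an element of $A$, then $w\notin A$ since $A$ is a local conglomerate; and $w\neq x$ because $x\in S^{+}$ forces $\overrightarrow{F}(x)\subseteq B$ whereas any element $w$ with $\overrightarrow{F}(w)\cap A\neq\emptyset$ lies in $\overleftarrow{F}(A)$, and if $x\in\overleftarrow{F}(A)$ then... — here the key is that $x\in S^{+}$ and consistency-type reasoning: if $x\in\overleftarrow{F}(A)$ then some $z'\in A$ has $z'\in\overrightarrow{F}(x)$, but $\overrightarrow{F}(x)\subseteq\overrightarrow{F}(A)\cup\overleftarrow{F}(A)$ and $z'\in A$ combined with $\overleftarrow{F}(A)\subseteq S\setminus A$ gives $z'\in\overrightarrow{F}(A)$, harmless; the genuine obstruction is whether $x$ could attack itself or attack an old element, which is blocked precisely because $x\in S^{+}$ means $\overrightarrow{F}(x)\subseteq B$ and one checks $x\notin B$ (if $x\in\overrightarrow{F}(A)$ then $x\in\overleftarrow{F}(A')$... ) — I expect this bookkeeping to be the main obstacle and will handle it by the same \emph{reductio} pattern as in Theorem \ref{theo3}: if adding $x$ created an edge inside $A'$, then applying $\phi$ to $(A',\overrightarrow{F}(A')\cup\overleftarrow{F}(A'))$ would put a common element in both coordinates, and I argue this cannot happen because $x\in S^{+}$ already certifies $\overrightarrow{F}(x)$ lands in $\overrightarrow{F}(A)\cup\overleftarrow{F}(A)$, each of which is disjoint from $A'$ (using $\overleftarrow{F}(A)\subseteq S\setminus A$ for the second and the fact that $x$ attacks nothing in $A$ for the first, since otherwise $x\in\overleftarrow{F}(A)\subseteq S\setminus A'$, contradiction with $x\in A'$). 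For condition 2, $\overrightarrow{F}(A')\setminus sinks(S)\subseteq\overleftarrow{F}(A')$: a non-sink target of $A'$ is either a non-sink target of $A$, already in $\overleftarrow{F}(A)\subseteq\overleftarrow{F}(A')$ because $A$ is a local conglomerate, or a non-sink element of $\overrightarrow{F}(x)$, which lies in $\overrightarrow{F}(A)\cup\overleftarrow{F}(A)$ by $x\in S^{+}$; if it lies in $\overleftarrow{F}(A)\subseteq\overleftarrow{F}(A')$ we are done, and if it lies in $\overrightarrow{F}(A)\setminus sinks(S)$ then it is in $\overleftarrow{F}(A)\subseteq\overleftarrow{F}(A')$ again by condition 2 for $A$. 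So condition 2 is essentially immediate once condition 1 is secured.

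For part 2, let $\phi((A',\overrightarrow{F}(A')\cup\overleftarrow{F}(A')))=(S'^{+},S'^{-})$. Since $A'$ is a local conglomerate (part 1), the same unpacking as in Theorem \ref{theo3} gives $S'^{+}=sinks(A')\cup\{y:\emptyset\neq\overrightarrow{F}(y)\subseteq\overrightarrow{F}(A')\cup\overleftarrow{F}(A')\}$. I must show $z\in S'^{+}$. If $z$ is a sink: $z\in S^{+}$ and $z$ a sink forces $z\in sinks(A)\subseteq sinks(A')\subseteq S'^{+}$. If $z$ is not a sink: $z\in S^{+}$ gives $\emptyset\neq\overrightarrow{F}(z)\subseteq\overrightarrow{F}(A)\cup\overleftarrow{F}(A)$; since $A\subseteq A'$ we have $\overrightarrow{F}(A)\subseteq\overrightarrow{F}(A')$ and $\overleftarrow{F}(A)\subseteq\overleftarrow{F}(A')$, so $\emptyset\neq\overrightarrow{F}(z)\subseteq\overrightarrow{F}(A')\cup\overleftarrow{F}(A')$, whence $z\in S'^{+}$ by the displayed formula. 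This half is a one-line monotonicity observation — the real content of the lemma is part 1, and within part 1 the delicate point, as flagged, is ruling out that $x$ attacks an element of $A\cup\{x\}$, which is exactly where the hypothesis $x\in S^{+}$ (rather than merely $x\notin\overleftarrow{F}(A)$) does the work: being in $S^{+}$ pins down $\overrightarrow{F}(x)$ inside the ``already decided false'' set $B$, and $B$ is disjoint from $A'$. I would present part 1 first, then derive part 2 as the short corollary above.
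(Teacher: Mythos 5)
Your part~2 is correct and is the same one-line monotonicity observation as in the paper, and your explicit check of the absorption condition $\overrightarrow{F}(A')\setminus sinks(S)\subseteq\overleftarrow{F}(A')$ is fine (the paper simply declares that only the independence condition needs proof). The genuine gap is exactly at the point you yourself flag as ``the main obstacle'': showing that adding $x$ creates no arrow inside $A'$, and in particular that no $w\in A$ has $x\in\overrightarrow{F}(w)$. Your justifications for this never close. The claim that $\overrightarrow{F}(A)$ and $\overleftarrow{F}(A)$ are ``disjoint from $A'$'' is supported by ``otherwise $x\in\overleftarrow{F}(A)\subseteq S\setminus A'$, contradiction with $x\in A'$'', but the inclusion $\overleftarrow{F}(A)\subseteq S\setminus A'$ is (beyond condition~1 for $A$) precisely what is being established, so the step is circular; the attempt to exclude $x\in\overrightarrow{F}(A)$ is left as an ellipsis; and in the passage where you consider $x\in\overleftarrow{F}(A)$ you derive $z'\in\overrightarrow{F}(A)$ with $z'\in A$ and call it ``harmless'', when it is in fact the contradiction you need (the attacker of $z'$ then lies in $A\cap\overleftarrow{F}(A)$, violating condition~1 for $A$).

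The missing ingredient is the one your argument never invokes: the absorption condition of $A$ itself. In the nontrivial case $x\notin A$, $x\in S^{+}$ gives $\emptyset\neq\overrightarrow{F}(x)\subseteq\overrightarrow{F}(A)\cup\overleftarrow{F}(A)$, and both of these sets are disjoint from $A$ by condition~1 for $A$; hence $\overrightarrow{F}(x)\cap A=\emptyset$, which rules out $x$ attacking an element of $A$. Now if some $w\in A$ attacked $x$, then $x\in\overrightarrow{F}(A)\setminus sinks(S)$, so condition~2 for $A$ would force $x\in\overleftarrow{F}(A)$, i.e.\ $\overrightarrow{F}(x)\cap A\neq\emptyset$ --- contradiction; the same pair of facts also excludes a self-loop at $x$. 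This is exactly how the paper's proof of the lemma disposes of the two cases ($w\in\overrightarrow{F}(x)$ contradicts $x\in S^{+}$; $x\in\overrightarrow{F}(w)$ contradicts absorption by $A$), so your proposal, while structured like the paper's, leaves its central case unproven.
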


\begin{proof}
\begin{enumerate}
  \item We only have to prove that $\overleftarrow{F}(A')\subseteq S\setminus A'$. Assume the contrary. Since $\overleftarrow{F}(A)\subseteq S\setminus A$, there exists some $w\in A$ such that either $x\in \overrightarrow{F}(w)$ or $w\in \overrightarrow{F}(x)$. Assume $x\in\overrightarrow{F}(w)$. Since $x\in S^{+}$, by definition of $\phi$, $\emptyset\neq \overrightarrow{F}(x)\subseteq\overrightarrow{F}(A)\cup \overleftarrow{F}(A)$. Hence $\overrightarrow{F}(x)\cap A=\emptyset$, which implies that $A$ does not absorb $w$. This contradicts that $A$ is a local conglomerate. Assume now that $w\in \overrightarrow{F}(x)$. Then $x\not\in S^{+}$. Contradiction.
  \item Since $z\in S^{+}$, we have that $\overrightarrow{F}(z)\subseteq\overrightarrow{F}(A)\cup\overleftarrow{F}(A)$. Then, given that $A\subseteq A'$, $\overrightarrow{F}(z)\subseteq\overrightarrow{F}(A')\cup\overleftarrow{F}(A')$. Therefore, $z\in S'^{+}$.
\end{enumerate}
\end{proof}

Caminada \cite{caminada2006} showed that preferred extensions correspond to labellings that maximize $\tt T$ (or, equivalently, $\tt F$). We can consequently put maximal local conglomerates also in correspondence with such labellings in our framework.

\begin{theorem}\label{theo4}
$L$ is a labelling that maximizes $\tt T$  iff $A=\{x: L(x)=\tt{T}\}$ is a maximal local conglomerate.
\end{theorem}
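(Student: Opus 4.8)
The plan is to deduce the statement from three auxiliary facts; the route mirrors Theorem~\ref{theo} (classical labellings $\leftrightarrow$ conglomerates), lifted to the maximal, possibly-$\tt{U}$ setting. The facts are: (i) for \emph{every} labelling $L$, the set $\{x : L(x) = \tt{T}\}$ is a local conglomerate; (ii) \emph{every} maximal (w.r.t.\ $\subseteq$) local conglomerate is of the form $\{x : L(x) = \tt{T}\}$ for some labelling $L$; and (iii) every local conglomerate is contained in a maximal one. Granting these, both directions are immediate. For ``if'': if $A = \{x : L(x) = \tt{T}\}$ is a maximal local conglomerate and some labelling $L'$ had $\{x : L'(x) = \tt{T}\} \supsetneq A$, then by (i) that set would be a local conglomerate strictly larger than the maximal $A$, which is impossible. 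For ``only if'': if $L$ maximizes $\tt{T}$ then $A = \{x : L(x) = \tt{T}\}$ is a local conglomerate by (i); were $A$ not maximal, by (iii) it would lie strictly inside a maximal local conglomerate $A'$, and by (ii) $A'$ would be the $\tt{T}$-set of some labelling, contradicting the maximality of $L$'s $\tt{T}$-set.

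Fact (i) follows directly from the two biconditional clauses of a labelling. If $y \in \overleftarrow{F}(A)$, then $y$ has a successor $x \in A$ with $L(x) = \tt{T}$; in particular $\overrightarrow{F}(y) \neq \emptyset$, so $y$ is not a sink and clause 2(a) forces $L(y) = \tt{F}$, whence $y \notin A$, giving $\overleftarrow{F}(A) \subseteq S \setminus A$. If $w \in \overrightarrow{F}(A) \setminus sinks(S)$, pick $x \in A$ with $w \in \overrightarrow{F}(x)$; clause 2(b) gives $L(w) = \tt{F}$, and since $w$ is not a sink, clause 2(a) supplies some $z \in \overrightarrow{F}(w)$ with $L(z) = \tt{T}$, i.e.\ $z \in A$, so $w \in \overleftarrow{F}(A)$. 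Thus both conditions of a local conglomerate hold.

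For fact (ii), let $A$ be a maximal local conglomerate. By Theorem~\ref{theo3}, $(A, \overrightarrow{F}(A) \cup \overleftarrow{F}(A))$ is a \emph{consistent} fixed point of $\phi$; write $S^{+} = A$ and $S^{-} = \overrightarrow{F}(A) \cup \overleftarrow{F}(A)$, so $S^{+} \cap S^{-} = \emptyset$. Define $L(x) = \tt{T}$ on $S^{+}$, $L(x) = \tt{F}$ on $S^{-}$, and $L(x) = \tt{U}$ elsewhere; this is well defined by consistency, and its set of $\tt{T}$-labelled nodes is exactly $A$. That $L$ obeys the labelling conditions is read off the fixed-point identities $S^{+} = sinks(S^{+}) \cup \{v : \emptyset \neq \overrightarrow{F}(v) \subseteq S^{-}\}$ and $S^{-} = sinks(S^{-}) \cup \{v : \overrightarrow{F}(v) \cap S^{+} \neq \emptyset\}$: for a non-sink $x$ the $sinks$ terms vanish and $\overrightarrow{F}(x) \neq \emptyset$, so $L(x) = \tt{F}$ iff $x \in S^{-}$ iff $\overrightarrow{F}(x) \cap S^{+} \neq \emptyset$ iff $L(z) = \tt{T}$ for some $z \in \overrightarrow{F}(x)$ (clause 2(a)), and $L(x) = \tt{T}$ iff $x \in S^{+}$ iff $\overrightarrow{F}(x) \subseteq S^{-}$ iff $L(z) = \tt{F}$ for every $z \in \overrightarrow{F}(x)$ (clause 2(b)). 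Fact (iii) is a routine Zorn's lemma argument, once one checks that the union of a $\subseteq$-chain of local conglomerates is again a local conglomerate: condition 1 survives because the members of the chain are pairwise comparable, and condition 2 because $\overrightarrow{F}(\cdot)$ and $\overleftarrow{F}(\cdot)$ distribute over unions.

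The step I expect to demand the most care is fact (ii). The tempting shortcut ``every local conglomerate is the $\tt{T}$-set of a labelling'' is false for non-maximal ones: in Example~\ref{relgr} the singleton $\{a\}$ is a local conglomerate, yet every labelling with $L(a) = \tt{T}$ is forced to set $L(c) = \tt{T}$ too, so $\{a\}$ is not such a set. This is exactly why Theorem~\ref{theo3} --- whose proof uses maximality to turn $A$ into a genuine fixed point of $\phi$ --- must be invoked, and why in the ``only if'' direction one cannot argue through an arbitrary larger local conglomerate but must pass to a maximal one.
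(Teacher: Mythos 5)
Your proposal is correct, but it takes a noticeably different route from the paper's own proof. The paper argues both directions directly: for ``if'' it supposes some sentence $z\notin A$ could be labelled $\tt T$ and tries to show that $A\cup\{z\}$ would be a local conglomerate strictly containing $A$; for ``only if'' it picks $x\in A'\setminus A$ in a larger local conglomerate and does a case analysis on $L(x)$, disposing of the $L(x)=\tt F$ case via the independence condition and of the $L(x)=\tt U$ case by asserting that $x$ would then be paradoxical. Your decomposition into (i) the $\tt T$-set of \emph{any} labelling is a local conglomerate, (ii) every \emph{maximal} local conglomerate is the $\tt T$-set of some labelling (obtained by reading a labelling off the consistent fixed point supplied by Theorem~\ref{theo3}), and (iii) a Zorn-type extension of any local conglomerate to a maximal one, is more modular and in places more careful than the paper's argument: in the ``only if'' direction you replace the paper's shortcut ``$L(x)=\tt U$ implies $x$ is paradoxical'' (which does not follow from the definitions, since paradoxicality requires $\tt U$ under \emph{every} labelling) by an explicit construction of a competing labelling whose $\tt T$-set strictly contains $A$; and in the ``if'' direction, applying fact (i) to the whole $\tt T$-set of the rival labelling avoids having to verify the absorption condition for the one-element extension $A\cup\{z\}$, which the paper leaves implicit. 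The price you pay is the reliance on Theorem~\ref{theo3} and on Zorn's lemma (the latter only matters for infinite $S$), whereas the paper aims at a self-contained elementary argument. Your side remark that non-maximal local conglomerates such as $\{a\}$ in Example~\ref{relgr} need not be $\tt T$-sets of labellings correctly identifies why maximality, via Theorem~\ref{theo3}, is indispensable in fact (ii).
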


\begin{proof}
(If) Let $A=\{x: L(x)=\tt{T}\}$ be a maximal local conglomerate. Let us assume, by way of the absurd, that there exists $z\not\in A$ such that $L(z)=\tt{T}$. Then for all $w\in \overrightarrow{F}(z)$, $L(w)=\tt{F}$. Then, since $\overrightarrow{F}(A)\setminus sinks(S)\subseteq \overleftarrow{F}(A)$ and $L(x)=\tt{T}$ for every $x\in A$, it follows that $z\not\in \overrightarrow{F}(A)$. Moreover, $z\not\in \overleftarrow{F}(A)$ either, otherwise we would have $L(z)=\tt F$. Then, $A\cup\{z\}$ is a local conglomerate greater than $A$, contradicting the hypothesis. Therefore, for every $z\not\in A$, $L(z)\neq\tt{T}$, which implies that $L$ is a labelling that maximizes $\tt{T}$.\\
(Only if) Let $L$ be a labelling that maximizes $\tt T$. First, it is easy to see that $A=\{x: L(x)=\tt{T}\}$ is a local conglomerate.  Now assume, by way of the absurd, that $A$ is not maximal. Then there exists some local conglomerate $A'$ such that $A\subset A'$. Let $x\in A'\setminus A$. Since $L$ maximizes $\tt T$, then $L(x)\neq\tt T$. If  $L(x)=\tt F$ then there exists some $z\in A$ such that $z\in \overrightarrow{F}(x)$. This contradicts that $A'$ is a local conglomerate. And if  $L(x)=\tt U$ then $x$ is paradoxical, which also contradicts that $A'$ is a local conglomerate. Therefore, $A$ is a maximal local conglomerate.
\end{proof}

So, from the  two previous theorems  we get the following:

\begin{corollary}
For every set of sentences $A\subseteq S$, the following three statements are equivalent:
\begin{enumerate}
  \item $A$ is a maximal local conglomerate.
  \item There exists a labelling $L$ that maximizes $\tt{T}$ and $A$ $=$ $\{x: L(x)=\tt{T}\}$.
  \item  $(A, \overrightarrow{F}(A)\cup \overleftarrow{F}(A))$ is a consistent maximal (w.r.t. $\leq$) fixed point of $\phi$.
\end{enumerate}
\end{corollary}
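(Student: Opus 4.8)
The plan is to deduce this corollary directly from the two preceding theorems, which between them already cover all the implications needed; no fresh argument is required, only a bookkeeping of what has been proved. Specifically, Theorem~\ref{theo4} gives the equivalence of statements (1) and (2): a set $A\subseteq S$ is a maximal (w.r.t.\ $\subseteq$) local conglomerate if and only if $A=\{x: L(x)=\tt{T}\}$ for some labelling $L$ that maximizes $\tt{T}$. And Theorem~\ref{theo3} gives the equivalence of statements (1) and (3): $A$ is a maximal local conglomerate if and only if $(A,\overrightarrow{F}(A)\cup\overleftarrow{F}(A))$ is a consistent fixed point of $\phi$ that is maximal with respect to $\leq$.

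So the argument I would write is simply: assume (1); then (2) follows by the ``only if'' direction of Theorem~\ref{theo4} and (3) by the ``only if'' direction of Theorem~\ref{theo3}. Conversely, assume (2); then (1) follows by the ``if'' direction of Theorem~\ref{theo4}, and then (3) follows as before. Finally, assume (3); then (1) follows by the ``if'' direction of Theorem~\ref{theo3}, and (2) follows as before. Hence (1), (2) and (3) are pairwise equivalent. One could phrase this even more compactly as the chain $(2)\Leftrightarrow(1)\Leftrightarrow(3)$, citing the two theorems for the two biconditionals.

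I do not expect any genuine obstacle here: the corollary is a transitivity-of-equivalence observation, and the substantive work (including the use of the fundamental-lemma analogue to handle the maximality bookkeeping inside $\phi$, and Caminada's characterization of preferred extensions via $\tt{T}$-maximal labellings) has already been discharged in Theorems~\ref{theo3} and~\ref{theo4}. The only thing worth being mildly careful about is that the three statements must be quantified over the \emph{same} set $A\subseteq S$, which is exactly how the corollary is stated, so the pivot through ``$A$ is a maximal local conglomerate'' is legitimate in both directions for each pair. Accordingly, a one-line proof of the form ``Immediate from Theorems~\ref{theo3} and~\ref{theo4}'' would already be adequate, though I would spell out the pivot for the reader's convenience.
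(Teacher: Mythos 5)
Your proposal is correct and is exactly how the paper obtains the corollary: it is stated as an immediate consequence of Theorems~\ref{theo3} and~\ref{theo4}, pivoting through statement (1) just as you describe. Nothing further is needed.
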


Dung argued that argumentation frameworks with no stable semantics are not necessarily ``wrong''. Analogously, there is nothing necessarily ``wrong'' in ${\cal F}$-systems having no conglomerates. They may have many meaningful, non paradoxical parts, and those parts can be captured via local conglomerates.

Finally, the notion of local conglomerate clearly extends that of admissible set (provided that $F$ is changed to $F^{-1}$ appropriately). Let us argue why local conglomerates not corresponding to admissible sets can make sense in argumentation frameworks. Consider the system $\langle\{a, b\}, \{(a, b)\}\rangle$. Interpreted as an argumentation framework, we have that  argument $a$ attacks  argument $b$, and the only maximally admissible set is $\{a\}$ (by the way, $\emptyset$ is also admissible, but not maximally). On the other hand, there are two maximal local conglomerates: $\{a\}$ and $\{b\}$. In what sense can $\{b\}$ be ``admissible''?  The notion of admissibility was introduced by Dung as a model of the ``principle'' \emph{``The  one  who  has  the last   word   laughs   best''}.  As such, it is a good model. But we can also think of situations in which the last word is wrong. For example, assume $b$ is an argument without objections except for $a$, which is an argument that everybody in the audience would reject. Then that audience would possibly deem the argument $b$ \emph{in} inasmuch $a$ is \emph{out}, and even if $b$ is the last word. The local conglomerate $\{b\}$ enables to capture that possibility. A similar intuition is also modeled by value-based argumentation frameworks (VAF's) \cite{bench-capon}: if $a$ attacks $b$ but $b$ promotes a value that is preferred to the value promoted by $a$, then $a$ does not \emph{defeat} $b$. For example, consider, on the one hand, an argument that promotes the legalization of abortion as a public health necessity in view of a significant magnitude of women's deaths as a result of clandestine abortion practices and, on the other hand, an argument that promotes the prohibition of abortion as a need for protection of the fundamental human right to the life of an embryo. Suppose the second argument is advanced as an attack on the first. However, if the audience values public health over the right to life of a human embryo, it will result that the expected defeat will have no effect (the same applies in the opposite case). In this setting, we can capture the opposing opinions with two different (local) conglomerates.

\section{Conclusions}\label{conc}

The notion of conglomerate enabled us to essentially capture the same results as that of kernel regarding semantic paradoxes. But while kernels can only represent situations in which all the sink (object language) sentences in the system are true, conglomerates also enable to represent the false cases. Another important result is that conglomerates enable to characterize referential contradictions and tautologies, i.e. sentences that can be assigned only the false value or only the true value, respectively. Referential contradictions cannot belong to any conglomerate. And more generally, as a consequence of Theorem \ref{theo3}, it is easy to see that referential contradictions are excluded from every maximal local conglomerate.

Inspiration for an abstract treatment of ${\cal F}$-systems came from Dung's abstract argumentation frameworks \cite{dung1995}. Caminada's labelling semantics for Dung's systems \cite{caminada2006} were adapted for our purpose here. 
Dyrkolbotn \cite{dyrkolbotn2012} has previously treated paradox, kernel theory and argumentation frameworks on a common ground. He showed the connection between local kernels and admissible sets. We extended here that connection with the notion of local conglomerate, which in turn gives a twist to the notion of admissibility. In that respect, we plan to explore in the future the use of local conglomerates as a semantics for value-based argumentation frameworks \cite{bench-capon}, as suggested in Section \ref{dung}.\\


%
%

\begin{acknowledgements}
The author appreciates the invitation of the Department of Mathematics of the Universidad de Aveiro, Portugal, for a visit in September 2019, during which a previous version of this paper was presented.
\end{acknowledgements}

\bibliographystyle{spmpsci}      
\bibliography{biblio1b}   

\begin{thebibliography}{1}
\providecommand{\url}[1]{{#1}}
\providecommand{\urlprefix}{URL }
\expandafter\ifx\csname urlstyle\endcsname\relax
  \providecommand{\doi}[1]{DOI~\discretionary{}{}{}#1}\else
  \providecommand{\doi}{DOI~\discretionary{}{}{}\begingroup
  \urlstyle{rm}\Url}\fi

\bibitem{bench-capon}
Bench{-}Capon, T.J.M.: Persuasion in practical argument using value-based
  argumentation frameworks.
\newblock Journal of Logic and Computation \textbf{13}(3), 429--448 (2003).
\newblock \doi{10.1093/logcom/13.3.429}.
\newblock \urlprefix\url{https://doi.org/10.1093/logcom/13.3.429}

\bibitem{beringer_schindler_2017}
Beringer, T., Schindler, T.: A graph-theoretic analysis of the semantic
  paradoxes.
\newblock The Bulletin of Symbolic Logic \textbf{23}(4), 442--492 (2017).
\newblock \doi{10.1017/bsl.2017.37}

\bibitem{caminada2006}
Caminada, M.: On the {Issue} of {reinstatement} in {argumentation}.
\newblock In: Logics in {Artificial} {Intelligence}, 10th {European}
  {Conference}, {JELIA} 2006, {Liverpool}, {UK}, {September} 13-15, 2006,
  {Proceedings}, pp. 111--123 (2006).
\newblock \doi{10.1007/11853886-11}

\bibitem{cook_2004}
Cook, R.T.: Patterns of paradox.
\newblock Journal of Symbolic Logic \textbf{69}(3), 767--774 (2004).
\newblock \doi{10.2178/jsl/1096901765}

\bibitem{dung1995}
Dung, P.M.: On the acceptability of arguments and its fundamental role in
  nonmonotonic reasoning, logic programming and n-person games.
\newblock Artificial Intelligence \textbf{77}(2), 321--357 (1995)

\bibitem{dyrkolbotn2012}
Dyrkolbotn, S.: Argumentation, paradox and kernels in directed graphs.
\newblock PhD Thesis, University of Bergen, Norway (2012)

\bibitem{GaleanaLara}
Galeana-S\'{a}nchez, H., Neumann-Lara, V.: On kernels and semikernels of
  digraphs.
\newblock Discrete Mathematics \textbf{48}(1), 67--–76 (1984)

\bibitem{Kripke1975}
Kripke, S.: Outline of a theory of truth.
\newblock Journal of Philosophy \textbf{72}(19), 690--716 (1975).
\newblock \doi{10.2307/2024634}

\bibitem{Rabern2013}
Rabern, L., Rabern, B., Macauley, M.: Dangerous reference graphs and semantic
  paradoxes.
\newblock Journal of Philosophical Logic \textbf{42}(5), 727--765 (2013).
\newblock \doi{10.1007/s10992-012-9246-2}

\end{thebibliography}


%


\end{document}